\newtheorem{theorem}{Theorem}[section]
\newtheorem{corollary}[theorem]{Corollary}
\newtheorem{lemma}[theorem]{Lemma}
\newtheorem{remark}[theorem]{Remark}
\newenvironment{proof}[1][Proof]{\textbf{#1.} }{\hfill\rule{0.5em}{0.5em}}
{\catcode`\@=11\global\let\AddToReset=\@addtoreset
\AddToReset{equation}{section}

\AddToReset{theorem}{section}

\title{An endpoint case of the renormalization property for the relativistic Vlasov-Maxwell system}
\author{Thanh-Nhan Nguyen$^1$, Minh-Phuong Tran$^2$  \\ {\small   $^1$Department of Mathematics,  Ho Chi Minh City University of Education, \\ Ho Chi Minh City, Viet Nam\medskip\\ Email: \texttt{nhannt@hcmue.edu.vn} \medskip\\ $^2$Applied Analysis Research Group, Faculty of Mathematics and Statistics,\\ Ton Duc Thang University, Ho Chi Minh City, Viet Nam \medskip\\ Email: \texttt{tranminhphuong@tdtu.edu.vn}} }
\date{\today}

\begin{document}
 
\maketitle
\begin{abstract}

The aim of this paper is to improve the previous work on the relativistic Vlasov-Maxwell system, one of the most important equations in plasma physics. Recently in [Onsager type conjecture and renormalized solutions for the relativistic Vlasov-Maxwell system, \textit{Quart. Appl. Math.}, \textbf{78}, 193-217 (2020)], C. Bardos \textit{et al.} presented a proof of an Onsager type conjecture on renormalization property and the entropy conservation laws for the relativistic Vlasov-Maxwell system. Particularly, authors proved that if the distribution function $u \in L^{\infty}(0,T;W^{\theta,p}(\mathbb{R}^6))$ and the electromagnetic field $E,B \in L^{\infty}(0,T;W^{\kappa,q}(\mathbb{R}^3))$, with $\theta, \kappa \in (0,1)$ such that $\theta\kappa + \kappa + 3\theta - 1>0$ and $1/p+1/q\le 1$, then the renormalization property and entropy conservation laws hold. To determine a complete proof of this work, in the present paper we improve their results under a weaker regularity assumptions for weak solution to the relativistic Vlasov-Maxwell equations. More precisely, we show that under the similar hypotheses, the renormalization property and entropy conservation laws for the weak solution to the relativistic Vlasov-Maxwell system even hold for the endpoint case $\theta\kappa + \kappa + 3\theta - 1 = 0$. Our proof is based on the better estimations on regularization operators.

\medskip

\noindent 

\medskip

\noindent {\bf Keywords:} Relativistic Vlasov-Maxwell system, Onsager type conjecture, renormalization property, entropy conservation laws. 

\end{abstract}   
                  
\section{Introduction} \label{sec:intro}
In recent years, mathematicians have devoted much attention to the relativistic Vlasov-Maxwell system, the most important equation describes the distribution of particles in phase space of a monocharged plasma under relativistic effects. There has been an increasing activity that studied the Vlasov-Maxwell system in kinetic plasma physics. It is well-known that the Vlasov equation describes the time evolution of particles in a plasma, how the plasma response to electromagnetic fields. This equation finds the unknown distribution function of particles $u=u(t,x,\varsigma)$ satisfies: 
\begin{align}\label{eq:sys1}
\partial_t u + v \cdot \nabla_x u + \mathcal{F} \cdot \nabla_{\varsigma} u = 0, 
\end{align}
where $(t, x, \varsigma) \in \mathbb{R}^+\times \mathbb{R}^3 \times \mathbb{R}^3$ represent time, position and momentum of particles, respectively. The relativistic velocity $v$ of a particle with momentum $\varsigma \in \mathbb{R}^3$ is given by
\begin{align}\label{eq:sys4}
v(\varsigma) =   \frac{\varsigma}{\sqrt{1+|\varsigma|^2}}.
\end{align}
The consideration of problem may be under electromagnetic, in which the Lorentz force $\mathcal{F} = E + v \times B$ corresponds to the self-consistent electric field $E = E(t,x)$ and magnetic field $B=B(t,x)$ generated by the charged particles in the plasma. They are coupled satisfying Maxwell's equations
\begin{align}\label{eq:sys2}
\partial_t E - \mbox{curl}\, B = -j, & \quad \partial_t B + \mbox{curl}\, E = 0,\\ \label{eq:sys2b}
\mbox{div}\, E = \rho, & \quad \mbox{div}\, B = 0,
\end{align}
where the quantities $\rho=\rho(t,x)$ and $j=j(t,x)$ are the charge density and electric current density of the plasma, respectively, defined by
\begin{align}\label{eq:sys3}
\rho(t,x) = \int_{\mathbb{R}^3} u(t,x,\varsigma)d\varsigma; \quad j(t,x) = \int_{\mathbb{R}^3}v(\varsigma)u(t,x,\varsigma)d\varsigma.
\end{align}

Maxwell's equations must be solved together with the Vlasov equation \eqref{eq:sys1}, so-called the Vlasov-Maxwell system. Here, we are interested in the Cauchy problem for system \eqref{eq:sys1}-\eqref{eq:sys3}, where the initial data given as
\begin{align*}
&u(0,x,\varsigma) = u_0(x,\varsigma) \ge 0,\\ 
&E(0,x) = E_0(x), \quad B(0,x) = B_0(x), \\ 
&\mbox{div}\, E_0 = \rho_0 = \int_{\mathbb{R}^3}u_0d\varsigma, \quad \mbox{div}\, B_0 = 0.
\end{align*}

There are many interesting problems that related to the Vlasov-Maxwell system~\eqref{eq:sys1}-\eqref{eq:sys3} that make the range of its application has been considerably extended. For instance, the existence and uniqueness of analytical solutions to this, especially for high dimensions; regularity results for the system in some spaces; the conduction of sharp estimates for solutions; some numerical methods and simulations on the solutions, etc, are at the core of many researching topics at the moment. 

The global existence of solution to this earlier has been studied intensively by several authors, such as R.J.~Diperna and P.-L.~Lions in~\cite{Diperna}, Y. Guo in~\cite{Guo, Guo2} or G. Rein in~\cite{Rein} and several references therein. Later, different approaches to the results related to this system were recently achieved and reviewed by other authors. In our knowledge, there has been a few results on the regularity of this system. Recently in 2018, N. Besse \textit{et al.} have showed in~\cite{Besse} that if the macroscopic kinetic energy is in $L^2$, then the electric and magnetic fields belong to the Sobolev space $H^s_{\text{loc}}(\mathbb{R}^+\times \mathbb{R}^3)$ with $s = 6/(13+\sqrt{142})$. Moreover, in~\cite{Bardos2,Gwiazda}, authors have established the critical regularity of weak solutions to a general system of entropy conservation laws which are related to the famous Onsager exponent 1/3. In the nearest research paper~\cite{Bardos}, Bardos {\it{et al.}} gave a proof of an Onsager type conjecture on renormalization property and entropy conservation laws for the Vlasov-Maxwell equations. More precisely, their work was devoted to the results that if the distribution function $u \in L^{\infty}(0,T;W^{\theta,p}(\mathbb{R}^6))$ and the electromagnetic fields $E,B \in L^{\infty}(0,T;W^{\kappa,q}(\mathbb{R}^3))$, where $\theta, \kappa \in (0,1)$ satisfying $\theta\kappa + \kappa + 3\theta - 1>0$ with $1/p+1/q\le 1$, then the renormalization property holds. As there have been very few results concerning the regularity of this system, such extensions have been promising to discuss under various assumptions and conditions of the problem formulation. In the present paper, based on the regularity assumptions of weak solution to the Vlasov-Maxwell equations, a small portion of that result is improved, where the conclusion of this property holds even for $\theta\kappa + \kappa + 3\theta - 1 \ge 0$, the renormalization property and entropy conservation laws hold under the same hypotheses. To our knowledge, from the mathematical point of view, for the endpoint case $\theta\kappa + \kappa + 3\theta - 1 = 0$, the proof is more challenging than what obtained in \cite{Bardos}. Compare to the previous study for the case $\theta\kappa + \kappa + 3\theta - 1> 0$, ours have the advantage that for $\theta\kappa + \kappa + 3\theta - 1 \ge 0$, we work on the weaker regularity assumptions, and the effective technique is applied to extend the proof. The key idea comes from the better estimations on regularization operators that will be described later.

The rest of the paper is organized as follows. Next section \ref{sec:main} is devoted to some notations and definitions about the renormalization property and entropy conservation laws, and our main result of this paper is also stated therein. We then introduce in Section~\ref{sec:reg} some regularization operators and properties are also presented for later use. Finally, the last section gives a brief proof of the renormalization property and entropy conservation laws for Diperna-Lions weak solution to the Vlasov-Maxwell equations.
 
\section{Main result}
\label{sec:main}
Let us start this section by introducing some general notation and definitions concerning to the problem which will be used in the whole paper. In the sequel, we denote by $\mathcal{D}(\mathbb{R}^n)$, with $n \ge 1$, the space of infinitely differentiable functions with compact support and by $\mathcal{D}'(\mathbb{R}^n)$ the space of distribution. 

On the other hand, for $\theta \in (0,1)$, $1 \le p \le \infty$, the generalized fractional order Sobolev spaces $W^{\theta,p}(\mathbb{R}^n)$ is defined for any function $f$ belonging to $W^{\theta,p}(\mathbb{R}^n)$ if and only if the following Gagliardo-type norm is finite:
\begin{align*}
\|f\|_{W^{\theta,p}(\mathbb{R}^n)}:= \left(\int_{\mathbb{R}^n}|f(x)|^pdx\right)^{1/p} + \left(\int_{\mathbb{R}^n}\int_{\mathbb{R}^n} \frac{|f(x)-f(y)|^p}{|x-y|^{n+\theta p}}dxdy\right)^{1/p}< +\infty,
\end{align*}
in the case $1 \le p < \infty$ and
\begin{align*}
\|f\|_{W^{\theta,\infty}(\mathbb{R}^n)}:= \max\left\{\|f\|_{L^{\infty}(\mathbb{R}^n)}, \sup_{x\neq y \in \mathbb{R}^n} \frac{|f(x)-f(y)|}{|x-y|^{\theta}}\right\} < +\infty,
\end{align*}
for $p = \infty$.
Here and subsequently, $\mathcal{L}^1(\mathbb{R}^6)$ denotes the set of non-negative almost everywhere function $f$ such that
\begin{align*}
 \|f\|_{\mathcal{L}^1(\mathbb{R}^6)}: = \int_{\mathbb{R}^6} f(x,\varsigma) \sqrt{1+|\varsigma|^2} \, dx \, d\varsigma < +\infty.
\end{align*}
In addition, the notation $\mathcal{S}$ stands for the set of non-decreasing function $\mathcal{G} \in \mathcal{C}^1(\mathbb{R}^+;\mathbb{R}^+)$ such that $\lim_{t\to +\infty} \frac{\mathcal{G}(t)}{t} = +\infty.$

The weak solution of a coupled set of relativistic Vlasov-Maxwell equations involves the distribution function $u$ (describes plasma components), electric and magnetic fields $E, B$ (self-consistenly modified by particles). Here, we say that $(u,E,B)$ is a weak solution to relativistic Vlasov-Maxwell equations~\eqref{eq:sys1} if $(u,E,B)$ satisfies the following weak formulation
$$
 \int_0^T dt \int_{\mathbb{R}^3}dx \int_{\mathbb{R}^3}  u(\partial_t \varphi + v\cdot \nabla_x \varphi + \mathcal{F} \cdot \nabla_{\varsigma}\varphi) \, d\varsigma = 0,
$$
for all $\varphi \in \mathcal{D}((0,T)\times \mathbb{R}^6)$.
The existence result of a global in time weak solution to the relativistic Vlasov-Maxwell equations proposed by DiPerna-Lions is restated in the following theorem, see~\cite{Diperna} to which we refer the interested
readers.
\begin{theorem}\label{theo:1}
Let $u_0 \in \mathcal{L}^1 \cap L^{\infty}(\mathbb{R}^6)$ and $E_0$, $B_0 \in L^2(\mathbb{R}^3)$ be initial conditions with satisfy the constraints
$$\normalfont{\mbox{div}}\, B_0 = 0, \quad \normalfont{\mbox{div}}\, E_0 = \int_{\mathbb{R}^3}u_0 \,d\varsigma, \quad \mbox{ in }\ \mathcal{D}'(\mathbb{R}^3).$$
Then there exists a global in time weak solution of the relativistic Vlasov-Maxwell system, i.e., there exist functions
\begin{align*}
\begin{split}
u \in L^{\infty}(\mathbb{R}^+;\mathcal{L}^1\cap L^{\infty}(\mathbb{R}^6)), \ &  \ E, B \in L^{\infty}(\mathbb{R}^+;L^2(\mathbb{R}^3)), \\
 \mbox{ and } \ \ \rho, j \in L^{\infty}&(\mathbb{R}^+;L^{4/3}(\mathbb{R}^3)),
 \end{split}
\end{align*}
such that $(u,E,B)$ satisfy~\eqref{eq:sys1}-\eqref{eq:sys2b} in the sense of distributions, where $\rho$ and $j$ are defined in~\eqref{eq:sys3}. 
\end{theorem}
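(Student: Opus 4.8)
The statement is the classical DiPerna--Lions global existence theorem, so the plan is to follow the scheme of~\cite{Diperna}: construct a sequence of approximate solutions, derive uniform a priori bounds matching exactly the function spaces in the conclusion, extract weak limits, and then upgrade to the strong compactness needed to pass to the limit in the quadratic flux $\mathcal F\cdot\nabla_\varsigma u=(E+v\times B)\cdot\nabla_\varsigma u$ by means of velocity averaging. A preliminary observation makes the whole setup consistent: since $E$ does not depend on $\varsigma$ and $\nabla_\varsigma\cdot\big(v(\varsigma)\times B(t,x)\big)=0$ (the Jacobian $\partial_{\varsigma_i}v_j$ is the symmetric Hessian of $\sqrt{1+|\varsigma|^2}$, which is annihilated upon contraction with the antisymmetric Levi--Civita symbol), the Vlasov equation~\eqref{eq:sys1} can be written in conservation form $\partial_t u+\nabla_x\cdot(vu)+\nabla_\varsigma\cdot(\mathcal F u)=0$. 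Hence the weak formulation already introduced makes sense with $u$ merely in $L^\infty(\mathbb R^+;\mathcal L^1\cap L^\infty)$; moreover, integrating this equation in $\varsigma$ gives the continuity equation $\partial_t\rho+\nabla_x\cdot j=0$, from which $\partial_t(\mbox{div}\,E-\rho)=\mbox{div}(\mbox{curl}\,B-j)-\partial_t\rho=0$ and $\partial_t(\mbox{div}\,B)=-\mbox{div}(\mbox{curl}\,E)=0$, so the two divergence constraints propagate in time from the initial data.

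For the approximation I would mollify the force, replacing $\mathcal F$ by $\mathcal F_n=(E_n+v\times B_n)*_x\chi_n$ and solving the resulting system — classical theory then furnishes smooth global solutions $(u_n,E_n,B_n)$ with $u_n\ge 0$ — or, equivalently, use the delayed/iterative coupling of~\cite{Diperna} between linear Vlasov transport (solved along characteristics) and linear Maxwell (solved via the retarded potentials for the d'Alembertian). The a priori bounds are the same either way. Transport along the smooth, divergence-free characteristic flow preserves every $L^p_{x,\varsigma}$ norm, so $\|u_n(t)\|_{L^1}=\|u_0\|_{L^1}$ and $\|u_n(t)\|_{L^\infty}\le\|u_0\|_{L^\infty}$; and the total energy
\[
\mathcal E_n(t)=\tfrac12\int_{\mathbb R^3}\big(|E_n|^2+|B_n|^2\big)\,dx+\int_{\mathbb R^6}u_n\sqrt{1+|\varsigma|^2}\,dx\,d\varsigma
\]
is bounded by $\mathcal E_n(0)\to\mathcal E(0)<\infty$, giving $u_n$ bounded in $L^\infty_t\mathcal L^1$ and $E_n,B_n$ bounded in $L^\infty_tL^2_x$. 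The moment bounds then follow by the standard interpolation: splitting $\int u_n\,d\varsigma$ at $|\varsigma|=R$, estimating the inner part by $\|u_n\|_\infty R^3$ and the outer part by $R^{-1}\int u_n\sqrt{1+|\varsigma|^2}\,d\varsigma$, and optimizing over $R$, one obtains $\rho_n\in L^\infty_tL^{4/3}_x$ with a bound depending only on $\|u_0\|_\infty$ and $\mathcal E(0)$; since $|v(\varsigma)|\le 1$, the same estimate controls $j_n$ in $L^\infty_tL^{4/3}_x$.

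With these uniform bounds I would pass to a subsequence so that $u_n\overset{*}{\rightharpoonup}u$ in $L^\infty(\mathbb R^+;\mathcal L^1\cap L^\infty)$, $E_n\rightharpoonup E$, $B_n\rightharpoonup B$ weakly-$*$ in $L^\infty_tL^2_x$, and $\rho_n\rightharpoonup\rho$, $j_n\rightharpoonup j$ in $L^\infty_tL^{4/3}_x$. The only nontrivial point is the convergence of the nonlinear flux $\mathcal F_nu_n$, a product of two merely weakly convergent sequences. Here I would invoke velocity averaging: $u_n$ solves a transport equation whose right-hand side is, in conservation form, bounded uniformly in $n$ in a negative-order Sobolev space in $(t,x,\varsigma)$, so the velocity averages $\int u_n\psi(\varsigma)\,d\varsigma$ — in particular $\rho_n$ and $j_n$ — are relatively compact in $L^p_{loc}((0,T)\times\mathbb R^3)$, whence $\rho_n\to\rho$ and $j_n\to j$ strongly there. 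Feeding this into Maxwell's equations~\eqref{eq:sys2}--\eqref{eq:sys2b}, which express $(E_n,B_n)$ through the d'Alembertian with source $(\rho_n,j_n)$ plus the free waves carried by the fixed convergent data, transfers the strong compactness to the fields: $E_n\to E$, $B_n\to B$ strongly in $L^2_{loc}((0,T)\times\mathbb R^3)$. Combining strong $L^2_{loc}$ convergence of $(E_n,B_n)$ with weak-$*$ $L^\infty$ convergence of $u_n$ then lets me pass to the limit in $\int_0^T\!\!\int_{\mathbb R^3}\!\!\int_{\mathbb R^3}u_n(\partial_t\varphi+v\cdot\nabla_x\varphi+\mathcal F_n\cdot\nabla_\varsigma\varphi)\,d\varsigma\,dx\,dt$ for every $\varphi\in\mathcal D((0,T)\times\mathbb R^6)$, recovering the weak Vlasov equation for $(u,E,B)$; the Maxwell equations and the two divergence constraints pass to the limit by linearity, and $u\ge 0$, the energy bound, and the claimed integrability of $(\rho,j)$ are all stable under these limits.

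The main obstacle is precisely this compactness step: checking that the regularized $u_n$ genuinely meet the hypotheses of a velocity averaging lemma (a uniform bound of the transport source in an $L^2_tH^{-1/2}$- or $W^{-1,r}$-type norm in $(t,x,\varsigma)$, together with adequate integrability in $\varsigma$) requires care with the $\varsigma$-localization, since $u_n$ is only in $\mathcal L^1\cap L^\infty$ rather than compactly supported in momentum, and one must verify that the mollification in $\mathcal F_n$ does not destroy the averaging estimate nor the uniform bounds. A secondary, more routine, technical point is to ensure the approximate scheme actually produces global-in-time smooth solutions for which the conservation laws above hold with equality, and that the initial data are recovered in an appropriate weak (e.g.\ $C_w$ in time) sense after passing to the limit.
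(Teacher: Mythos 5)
The paper does not prove Theorem~\ref{theo:1} at all: it is quoted verbatim from DiPerna--Lions and the reader is referred to~\cite{Diperna}, so the only meaningful comparison is with that cited proof. Your sketch is essentially a reconstruction of it: conservation-form rewriting of the Vlasov equation (your remark that $\nabla_\varsigma\cdot(v\times B)=0$ because $\partial_{\varsigma_i}v_j$ is symmetric is exactly the right point), propagation of the divergence constraints, the $L^p$ and energy a priori bounds, the interpolation $\|\rho\|_{L^{4/3}}\lesssim \|u\|_{L^\infty}^{1/4}$-type estimate for the moments, and velocity averaging to handle the quadratic term. That is the correct and intended route, and your flagged caveats (momentum tails in the averaging lemma, global existence of the approximate scheme, recovery of the data) are the genuine technical points of~\cite{Diperna}.

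The one step you state that is neither justified as written nor actually needed is the claim that strong compactness of $\rho_n, j_n$ ``transfers'' through Maxwell's equations to give $E_n\to E$, $B_n\to B$ strongly in $L^2_{loc}$. The sources are only bounded in $L^\infty_t L^{4/3}_x$, and the one derivative gained by the wave propagator is spent on $\nabla_x\rho_n$ and $\partial_t j_n$, so the Duhamel part converges strongly only in norms weaker than $L^2_{loc}$; combining with the uniform $L^\infty_t L^2_x$ energy bound you can salvage strong convergence in $L^q_{loc}$ for every $q<2$ (which does suffice against the weak-$*$ $L^\infty$ limit of $u_n$), but not the stated $L^2_{loc}$ claim. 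DiPerna--Lions avoid this entirely and in a cleaner way: in the weak formulation the momentum variable is already integrated against $\nabla_\varsigma\varphi$ with $\varphi$ compactly supported, so the nonlinear term has the form $\int E_n\cdot\bigl(\int u_n\,\nabla_\varsigma\varphi\,d\varsigma\bigr)\,dx\,dt$, i.e.\ a weakly convergent field paired with a velocity average of $u_n$; the averaging lemma gives strong $L^2_{loc}$ compactness of that average directly, and no strong compactness of the fields is ever required. You should either replace your field-compactness step by this weak--strong pairing, or weaken the claimed convergence of the fields to $L^q_{loc}$, $q<2$, with the interpolation argument made explicit.
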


Let $(u, E, B)$ be a weak solution to the relativistic Vlasov-Maxwell system~\eqref{eq:sys1}-\eqref{eq:sys3}, as in Theorem~\ref{theo:1}. Then for any smooth function $\mathcal{G} \in \mathcal{C}^1(\mathbb{R}^+; \mathbb{R}^+)$, we say that $(u, E, B)$ satisfies the renormalization property if 
\begin{align}\label{eq:renorl_property}
\partial_t(\mathcal{G}(u)) + \nabla_x \cdot (v \mathcal{G}(u)) + \nabla_{\varsigma} \cdot (\mathcal{F}\mathcal{G}(u)) = 0, \quad \mbox{ in } \ \mathcal{D}'((0,T)\times \mathbb{R}^6)
\end{align}
in the sense of distribution, that means, 
\begin{align*}
\int_0^T dt \int_{\mathbb{R}^3}dx \int_{\mathbb{R}^3}  \mathcal{G}(u) \left( \partial_t \varphi + v \cdot \nabla_x \varphi + \mathcal{F} \cdot \nabla_{\varsigma} \varphi\right) \, d\varsigma  = 0, 
\end{align*}
for all $\varphi \in \mathcal{D}'((0,T)\times \mathbb{R}^6)$. Otherwise, solution $(u, E, B)$ is said to satisfy the local in space entropy conservation law, if
\begin{align}\label{eq:local_law1}
\partial_t\left(\int_{\mathbb{R}^3} \mathcal{G}(u)d\varsigma\right) + \nabla_x \cdot \left(\int_{\mathbb{R}^3} v\mathcal{G}(u)d\varsigma\right) = 0, \quad \mbox{in } \ \mathcal{D}'((0,T)\times\mathbb{R}^3),
\end{align}
and the local in momentum entropy conservation law, if
\begin{align}\label{eq:local_law2}
\partial_t\left(\int_{\mathbb{R}^3} \mathcal{G}(u)dx\right) + \nabla_{\varsigma} \cdot \left(\int_{\mathbb{R}^3} \mathcal{F}\mathcal{G}(u)dx\right) = 0, \quad \mbox{in } \ \mathcal{D}'((0,T)\times\mathbb{R}^3),
\end{align}
in the sense of distribution. In this way, we can state that $(u, E, B)$ satisfies the global entropy conservation law, if we have
\begin{align}\label{eq:global_law}
\int_{\mathbb{R}^6} \mathcal{G}(u(t_2,x,\varsigma)) \, d\varsigma \, dx = \int_{\mathbb{R}^6} \mathcal{G}(u(t_1,x,\varsigma))\, d\varsigma \, dx, \quad \mbox{ for } 0< t_1\le t_2 < T.
\end{align}

Let us state our main result about the renormalized property and entropy conservation laws for the global weak solution of relativistic Vlasov-Maxwell equations. Related to the present note, it emphasizes that the regularity assumptions on the weak solution in our work are weaker than in the paper of C. Bardos {\it et al.}~\cite{Bardos}, our improved results thus are more general. In particular, we prove that the renormalization property and entropy conservation laws for the global weak in time solution to the relativistic Vlasov-Maxwell's system~\eqref{eq:sys1}-\eqref{eq:sys3} even hold for the endpoint case $\theta\kappa + \kappa + 3\theta - 1 = 0$, as described in the following theorem.
\begin{theorem}\label{theo:main}
Let $(u, E, B)$ be a weak solution of the relativistic Vlasov-Maxwell system~\eqref{eq:sys1}-\eqref{eq:sys3} given by Theorem~\ref{theo:1}. Assume moreover that this weak solution satisfies the additional regularity assumptions
\begin{align}\label{cond:fEB}
u \in L^{\infty}(0,T;W^{\theta,p}(\mathbb{R}^6)) \quad \mbox{and} \quad E, B \in L^{\infty}(0,T;W^{\kappa,q}(\mathbb{R}^3)),
\end{align}
where $\theta, \kappa \in (0,1)$ such that $\theta \kappa + \kappa + 3 \theta - 1 \ge 0,$
and $p, q \in \mathbb{N}^*$ such that
\begin{align}\label{cond:pq}
\frac{1}{p} + \frac{1}{q}  = \frac{1}{r} \le 1 \ \mbox{if} \ 1 \le p,\,q<\infty, \ \mbox{and} \ 1 \le r <\infty \ \mbox{is arbitrary if} \ p = q = \infty.
\end{align}
Then for any entropy function $\mathcal{G} \in \mathcal{C}^1(\mathbb{R}^+; \mathbb{R}^+)$, the global weak solution $(u, E, B)$ satisfies the renormalization property~\eqref{eq:renorl_property}. Moreover, if $\mathcal{G} \in \mathcal{S}$ and the mapping $t \mapsto u(t,\cdot,\cdot)$ is uniformly integrable in $\mathbb{R}^6$, for almost everywhere $t \in [0,T]$, then the local entropy conservation laws~\eqref{eq:local_law1}-\eqref{eq:local_law2} and the global entropy conservation law~\eqref{eq:global_law} hold.
\end{theorem}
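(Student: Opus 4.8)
The plan is to run the DiPerna--Lions commutator (renormalization) scheme, but with a regularization that mollifies the spatial and momentum variables on two independent scales, and then to reach the endpoint $\theta\kappa+\kappa+3\theta-1=0$ by replacing the usual H\"older bound on translations with an averaged, hence vanishing, one. First I would fix even mollifiers $\alpha_\delta$ on $\mathbb R^3_x$ and $\beta_\eta$ on $\mathbb R^3_\varsigma$ and set $u_{\delta,\eta}=u*_x\alpha_\delta*_\varsigma\beta_\eta$; convolving the weak formulation of \eqref{eq:sys1} against $\alpha_\delta\otimes\beta_\eta$ in $(x,\varsigma)$ gives, in $\mathcal D'((0,T)\times\mathbb R^6)$,
$$
\partial_t u_{\delta,\eta}+v\cdot\nabla_x u_{\delta,\eta}+\mathcal F\cdot\nabla_\varsigma u_{\delta,\eta}=R_{\delta,\eta},
$$
where $R_{\delta,\eta}$ is the sum of the commutators of the regularization with $v(\varsigma)\cdot\nabla_x$ and with $\mathcal F\cdot\nabla_\varsigma$. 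Since $v$ does not depend on $x$, $E$ does not depend on $\varsigma$, and the Jacobian of $v(\varsigma)=\varsigma/\sqrt{1+|\varsigma|^2}$ is symmetric, one has $\mathrm{div}_x v=0$ and $\mathrm{div}_\varsigma\mathcal F=\mathrm{div}_\varsigma(v\times B)=0$, and $R_{\delta,\eta}$ is a genuine $L^1_{loc}$ function splitting into three pieces: the commutator of $\beta_\eta$ with multiplication by $v(\varsigma)$ in $v\cdot\nabla_x u_{\delta,\eta}$; the commutator of $\alpha_\delta$ with multiplication by $E(x)$ (and by the $x$-dependence of $B$) in $\mathcal F\cdot\nabla_\varsigma u_{\delta,\eta}$; and the commutator of $\beta_\eta$ with multiplication by $v(\varsigma)$ in the $v\times B$ part. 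Each carries one kernel derivative --- a factor $\eta^{-1}$ or $\delta^{-1}$ --- acting against a finite difference of $v$, $E$ or $B$, up to reconstruction terms obtained from $\int\nabla\alpha_\delta=\int\nabla\beta_\eta=0$ and an $O(\eta^2)$ remainder coming from the evenness of $\beta$.

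Next I would use that $u\in L^\infty$, so the $u_{\delta,\eta}$ are uniformly bounded and smooth in $(x,\varsigma)$ (and, via the equation, Lipschitz in $t$ into a negative Sobolev space), whence the chain rule together with the divergence-free structure gives
$$
\partial_t\mathcal G(u_{\delta,\eta})+\mathrm{div}_x\!\big(v\,\mathcal G(u_{\delta,\eta})\big)+\mathrm{div}_\varsigma\!\big(\mathcal F\,\mathcal G(u_{\delta,\eta})\big)=\mathcal G'(u_{\delta,\eta})\,R_{\delta,\eta}.
$$
Testing against $\varphi\in\mathcal D((0,T)\times\mathbb R^6)$ and letting $\delta,\eta\to0$, the left-hand side converges to the weak form of \eqref{eq:renorl_property} because $\mathcal G(u_{\delta,\eta})\to\mathcal G(u)$ in every $L^s_{loc}$, $s<\infty$, while $v\in L^\infty$ and $\mathcal F\in L^\infty_tW^{\kappa,q}_x\subset L^q_{loc}$ are fixed, and $\mathcal G'(u_{\delta,\eta})$ is bounded by $\sup_{[0,\|u\|_\infty]}|\mathcal G'|$. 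So everything reduces to proving $\|R_{\delta,\eta}\|_{L^1((0,T)\times K)}\to0$ for every compact $K$, along a suitable coupling $\delta=\delta(\eta)$. Estimating the three commutators by H\"older's inequality with the triple $(p,q,r)$ of \eqref{cond:pq} together with the fractional-regularity bounds $\|u(\cdot+h)-u\|_{L^p}\lesssim|h|^\theta\|u\|_{W^{\theta,p}}$, $\|E(\cdot+h)-E\|_{L^q}\lesssim|h|^\kappa\|E\|_{W^{\kappa,q}}$ and the second-difference bound $\|\Delta^x_h\Delta^\varsigma_{h'}u\|_{L^p}\lesssim\min(|h|,|h'|)^\theta\|u\|_{W^{\theta,p}}$, each term is dominated by some $\delta^a\eta^b$ with $a,b$ affine in $\theta,\kappa$; choosing $\delta$ to be an appropriate power of $\eta$ one can force all these exponents to be simultaneously nonnegative, and an elementary computation shows the resulting system is solvable exactly when $\theta\kappa+\kappa+3\theta-1\ge0$. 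When the inequality is strict all exponents are positive and the limit is zero.

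The hard part, and the genuinely new ingredient, is the endpoint $\theta\kappa+\kappa+3\theta-1=0$: there the optimal coupling leaves the exponents equal to $0$, and the above only yields $\|R_{\delta,\eta}\|_{L^1}=O(1)$. The remedy --- the sharper ``estimations on regularization operators'' --- is to never pass to a supremum over the translation parameter. From $\|u\|_{W^{\theta,p}}<\infty$, i.e. $\iint\frac{|u(z)-u(z')|^p}{|z-z'|^{6+\theta p}}\,dz\,dz'<\infty$, absolute continuity of a convergent integral gives
$$
\frac1{r^{\theta p}}\,\frac1{|B_r|}\int_{|h|\le r}\|u(\cdot+h)-u\|_{L^p(\mathbb R^6)}^{p}\,dh=\omega_u(r)\longrightarrow0\qquad(r\to0),
$$
and likewise $r^{-\kappa q}|B_r|^{-1}\int_{|h|\le r}\|E(\cdot+h)-E\|_{L^q}^{q}\,dh=\omega_E(r)\to0$, with the analogous statements for $B$ and for the smoothing errors $u-u*\beta_\eta$, $E-E*\alpha_\delta$ and their derivatives. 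Keeping the $h$-averages inside the commutator integrals and substituting these refined bounds multiplies each borderline estimate by a factor which is a product of such $\omega(\cdot)^{1/p}$, $\omega(\cdot)^{1/q}$, hence $o(1)$; this forces $\|R_{\delta,\eta}\|_{L^1((0,T)\times K)}\to0$ even at the endpoint and establishes \eqref{eq:renorl_property} for every $\mathcal G\in\mathcal C^1(\mathbb R^+;\mathbb R^+)$.

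Finally, for the conservation laws I would take $\mathcal G\in\mathcal S$ and $t\mapsto u(t,\cdot,\cdot)$ uniformly integrable. By de la Vall\'ee--Poussin's criterion the family $\{\mathcal G(u(t,\cdot,\cdot))\}_t$ is bounded in $L^1(\mathbb R^6)$ and equi-integrable; in particular $\mathcal G(u(t))$ tends to $0$ at spatial and momentum infinity in an integrated sense. Integrating \eqref{eq:renorl_property} over $\varsigma\in\mathbb R^3$ against $\psi=\psi(t,x)\in\mathcal D$, the contribution $\int_{\mathbb R^3}\mathrm{div}_\varsigma(\mathcal F\,\mathcal G(u))\,d\varsigma$ vanishes by the divergence theorem, since $|\mathcal F|\le|E|+|B|$ is independent of $\varsigma$ and $\mathcal G(u)$ is equi-integrable in $\varsigma$; this gives \eqref{eq:local_law1}, and symmetrically \eqref{eq:local_law2} by integrating in $x$. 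Integrating over all of $\mathbb R^6$ both flux terms disappear and one is left with $\partial_t\int_{\mathbb R^6}\mathcal G(u)\,d\varsigma\,dx=0$ in $\mathcal D'(0,T)$, i.e. $t\mapsto\int_{\mathbb R^6}\mathcal G(u(t))$ is a.e. constant, which is \eqref{eq:global_law}. The single step I expect to be decisive is Steps~3--4: producing, at the critical balance of scales, a commutator bound that is $o(1)$ rather than merely bounded, which is exactly what the improved regularization estimates buy over the cruder ones used previously.
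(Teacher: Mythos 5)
Your endpoint mechanism is the same as the paper's: the quantity you call $\omega_u(r)$ (the $h$-averaged translation modulus divided by $r^{\theta p}$) is exactly the annulus-restricted Gagliardo quantity $\Theta_f(\varepsilon)$ of \eqref{eq:Theta}, whose vanishing by absolute continuity of the convergent double integral is the content of Lemma~\ref{lem:omega}; the balance of scales and the critical exponent $(\theta\kappa+\kappa+3\theta-1)/2$ also coincide with the paper's choice $\sigma^{2}=\varepsilon^{\kappa+1}$, and your treatment of the conservation laws matches what the paper (following Bardos et al.) does. So the novelty you identify is the right one.

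However, there is a genuine gap in the commutator bookkeeping, and it is precisely at the step you call decisive. Because you keep $v$ and $\mathcal F$ \emph{unmollified} in the transport operator, your remainder $R_{\delta,\eta}$ unavoidably contains ``reconstruction'' terms such as $(E*\alpha_\delta-E)\cdot\nabla_\varsigma(u*\beta_\eta)$ (and its analogue with $v\times B$, and $(v*\beta_\eta-v)\cdot\nabla_x u_{\delta,\eta}$). The sharp norm bound for the first of these is $C\,\delta^{\kappa}\eta^{\theta-1}$ (times your $o(1)$ moduli): there is no second increment of $u$ available to upgrade $\delta^{\kappa}$ to $\delta^{\kappa+\theta}$. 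At the only admissible coupling $\eta^{2}\sim\delta^{1+\kappa}$ this term is of order $\delta^{(\theta\kappa+\kappa+\theta-1)/2}$, which at the endpoint equals $\delta^{-\theta}\to\infty$; a rate-free $o(1)$ factor cannot compensate a negative power, so $\|R_{\delta,\eta}\|_{L^{1}}\to0$ does not follow. In addition, your stated second-difference bound $\|\Delta^x_h\Delta^\varsigma_{h'}u\|_{L^p}\le C\min(|h|,|h'|)^{\theta}\|u\|_{W^{\theta,p}}$ is too weak: under that coupling $\delta<\eta$, so the minimum yields $\delta^{\theta}$ where the free-streaming commutator needs the gain $\eta^{\theta}$ (you obtain $\eta\,\delta^{\theta-1}$ instead of the required $\eta^{1+\theta}\delta^{\theta-1}$, and likewise $\eta^{-1}$ instead of $\eta^{\theta-1}$ in the Lorentz term). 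Carrying out your exponent computation with the bounds you actually list gives an admissible region of the form $(1-\theta)^{2}\le\kappa$ (or $(1-\theta)^2\le\kappa(1+\theta)$ in the most generous reading), which is strictly smaller than $\theta\kappa+\kappa+3\theta-1\ge0$ and misses the endpoint; the assertion ``solvable exactly when $\theta\kappa+\kappa+3\theta-1\ge0$'' is therefore unsupported. The paper circumvents both difficulties by mollifying the coefficients as well ($v^{\sigma}$, $\mathcal F^{\gamma,\varepsilon,\sigma}$ in the transport operator) and using the symmetric decompositions \eqref{eq:est6} and \eqref{eq:TE_TB}, in which every term carries an increment of the field \emph{and} an increment or smoothing error of $u$ at matched scales, producing the exponents $\varepsilon^{\theta-1}\sigma^{\theta+1}$ and $\varepsilon^{\theta+\kappa}\sigma^{\theta-1}$ of Lemma~\ref{lem:2}; the missing mixed gain is supplied by Corollary~\ref{cor:1}, which gives $|w|^{\theta}\varepsilon^{\theta-1}\Theta_u(\varepsilon)$ rather than a $\min$-type bound. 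If you insist on unmollified coefficients, you would have to dispose of the reconstruction terms differently (e.g.\ integrate them by parts against $\mathcal G'(u_{\delta,\eta})\psi$, using that $E*\alpha_\delta-E$ is $\varsigma$-divergence free), but then you are no longer proving $\|R_{\delta,\eta}\|_{L^{1}}\to0$ as stated, and the main terms still require the stronger mixed estimate to reach the endpoint.
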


\section{Regularization operators}\label{sec:reg}
In this section, let us mention the important consequences of this work, that leading to the proof of our main result. It is devoted to study the standard regularization operators and their properties, that gives us the idea to prove main result in this paper. We will now show their descriptions and prove some preparatory lemmas that are necessary for later use.

Let $\phi \in \mathcal{D}(\mathbb{R}^+;\mathbb{R}^+)$ be a smooth non negative function such that
\begin{align}\label{eq:varrho}
 {\mbox{supp}(\phi) \subset \left[1,2\right]}, \quad \int_{\mathbb{R}} \phi(\nu)d\nu = 1.
\end{align}
For every $\varepsilon>0$ and $n \in \mathbb{N}^*$, the radially-symmetric compactly-supported Friedrichs mollifier is given by
\begin{align}\label{eq:varrho_eps}
\phi_{\varepsilon}: \ \mathbb{R}^n \rightarrow \mathbb{R}^+, \quad x \mapsto \phi_{\varepsilon}(x) = {\varepsilon^{-n}} \phi\left(\varepsilon^{-1}|x|\right).
\end{align}
Let $\gamma, \varepsilon, \sigma$ be positive numbers and for  any distribution $f \in \mathcal{D}'(\mathbb{R}^n)$, $g \in \mathcal{D}'(\mathbb{R}^+\times\mathbb{R}^n)$ and $h \in \mathcal{D}'(\mathbb{R}^+\times\mathbb{R}^n\times \mathbb{R}^n)$ , we define their $\mathcal{C}^{\infty}$-regularization by
\begin{align*}
f^{\sigma}(x) :=  &\phi_{\sigma}(x) * f(x), \ \ g^{\varepsilon,\sigma}(t,x) := \phi_{\varepsilon}(t) *_t \phi_{\sigma}(x) *_x g(t,x), \ \mbox{ and} \\
& h^{\gamma,\varepsilon,\sigma}(t,x,\varsigma)  := \phi_{\gamma}(t) *_t \phi_{\varepsilon}(x) *_x\phi_{\sigma}(\varsigma) *_{\varsigma} h(t,x,\varsigma),
\end{align*}
where the operator $*$ denotes the standard convolution product. 

Before giving the proof of the main result, let us provide some preliminary lemmas. The following lemma establishes two basic estimations for the relativistic velocity $v$ in~\eqref{eq:sys4}.
\begin{lemma}\label{lem:1b} 
Let $\sigma>0$ and $v$ be the relativistic velocity given by~\eqref{eq:sys4}. Then for all $\varsigma, w \in \mathbb{R}^3$, we have the following estimations
\begin{align}\label{eq:est_v}
& |v(\varsigma-w) - v(\varsigma)| \le 2 |\upsilon|,\\ \label{eq:est_v_del}
& \left|v(\varsigma) - v^{\sigma}(\varsigma)\right| \le 4\sigma.
\end{align}
\end{lemma}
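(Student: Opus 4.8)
The plan is to treat the two estimates separately, starting with \eqref{eq:est_v}. For the first bound I would write $v(\varsigma) = \varsigma/\sqrt{1+|\varsigma|^2}$ and observe that the map $\varsigma \mapsto v(\varsigma)$ is globally Lipschitz with an explicit constant. The cleanest route is to compute the Jacobian $Dv(\varsigma)$ directly: one finds $\partial_{\varsigma_i} v_j(\varsigma) = \delta_{ij}(1+|\varsigma|^2)^{-1/2} - \varsigma_i\varsigma_j (1+|\varsigma|^2)^{-3/2}$, and then bound its operator norm. A quick way to see the bound $2$ is crude term-by-term estimation: $|\delta_{ij}(1+|\varsigma|^2)^{-1/2}| \le 1$ and $|\varsigma_i\varsigma_j|(1+|\varsigma|^2)^{-3/2} \le |\varsigma|^2(1+|\varsigma|^2)^{-3/2} \le 1$, which after summing gives something a bit worse than $2$, so I would instead use the sharper observation that $v$ maps $\mathbb{R}^3$ into the unit ball and estimate $|v(\varsigma-w)-v(\varsigma)| = |\int_0^1 Dv(\varsigma - tw)\,w\,dt| \le \|w\| \sup_{t}\|Dv(\varsigma-tw)\|$, using that $\|Dv(\eta)\| \le (1+|\eta|^2)^{-1/2}(1 + |\eta|^2(1+|\eta|^2)^{-1}) \le 2$ since both summands are at most $1$; in fact $(1+|\eta|^2)^{-1/2} \le 1$ and the correction term is smaller, so the bound $2|w|$ follows. (Writing $\upsilon$ for $w$ as in the statement, this is exactly \eqref{eq:est_v}.)

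For \eqref{eq:est_v_del} I would use the definition of the mollification, $v^{\sigma}(\varsigma) = \int_{\mathbb{R}^3}\phi_{\sigma}(w)\,v(\varsigma - w)\,dw$, together with $\int \phi_{\sigma} = 1$, to write
\[
v(\varsigma) - v^{\sigma}(\varsigma) = \int_{\mathbb{R}^3}\phi_{\sigma}(w)\,\bigl(v(\varsigma) - v(\varsigma - w)\bigr)\,dw.
\]
Taking absolute values and applying \eqref{eq:est_v} inside the integral gives $|v(\varsigma)-v^{\sigma}(\varsigma)| \le 2\int_{\mathbb{R}^3}\phi_{\sigma}(w)\,|w|\,dw$. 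Since $\phi_{\sigma}(w) = \sigma^{-3}\phi(\sigma^{-1}|w|)$ is supported where $|w| \le 2\sigma$ (because $\mathrm{supp}(\phi) \subset [1,2]$), the factor $|w|$ is at most $2\sigma$ on the support, so $2\int \phi_{\sigma}(w)|w|\,dw \le 2\cdot 2\sigma \cdot \int\phi_{\sigma} = 4\sigma$, which is precisely \eqref{eq:est_v_del}.

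There is no serious obstacle here; the only point requiring a little care is getting the constant $2$ in \eqref{eq:est_v} honestly from the Jacobian estimate rather than a lazy triangle inequality that would produce a worse constant, and making sure the support normalization of $\phi_{\sigma}$ is used correctly (support in $[\sigma, 2\sigma]$ in the radial variable) so that the constant in \eqref{eq:est_v_del} comes out as $4\sigma$ and not $2\sigma$ or $8\sigma$. Everything else is a direct mean-value-theorem plus mollifier computation.
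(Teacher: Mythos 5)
Your proposal is correct and follows essentially the same route as the paper: bound the Jacobian of $v$ by $2$, apply the fundamental theorem of calculus to get \eqref{eq:est_v}, then write $v-v^{\sigma}$ as a mollifier average of differences and use \eqref{eq:est_v} together with the support of $\phi_{\sigma}$ in the annulus $\sigma\le|\upsilon|\le 2\sigma$ to get the constant $4\sigma$ in \eqref{eq:est_v_del}. Nothing further is needed.
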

\begin{proof}
By a simple computation, we firstly get that
\begin{align}\label{eq:est1}
|\nabla_{\varsigma} v| = \left| \frac{I_3}{\sqrt{1+|\varsigma|^2}} - \frac{\varsigma \otimes \varsigma}{\sqrt{(1+|\varsigma|^2)^3}}\right|  \le 2,
\end{align}
where $I_3$ denotes the identity matrix of size 3. Combining inequality~\eqref{eq:est1} and the fundamental theorem of calculus, we obtain the first basic estimate~\eqref{eq:est_v},
\begin{align*}
|v(\varsigma-w) - v(\varsigma)| \le |\upsilon|\int_0^1 |\nabla v(\varsigma - s \upsilon)|ds \le 2 |\upsilon|.
\end{align*}
By using this estimation, we obtain the second basic estimate~\eqref{eq:est_v_del} as follows
\begin{align*}
\left|v(\varsigma) - v^{\sigma}(\varsigma)\right| = \left|\int_{\mathbb{R}^3} \phi_{\sigma}(\upsilon)(v(\varsigma)-v(\varsigma-w)) d\upsilon \right| \le 2\left|\int_{\mathbb{R}^3} \phi_{\sigma}(\upsilon) |\upsilon| d\upsilon \right| \le 4\sigma.
\end{align*}
\end{proof}

Some well-known properties for $\mathcal{C}^{\infty}$-regularization will be presented in the next lemma, and for the proofs of $(ii)$ and $(iii)$, it refers the reader to some fine papers found in~\cite[Proposition 4.2]{Cormick2013} or in~\cite{Alinhac}.

\begin{lemma}\label{lem:1} 
\begin{enumerate}
\item[(i)] Let $f \in \mathcal{D}'(\mathbb{R}^n)$, $g \in \mathcal{D}(\mathbb{R}^n)$ and $\varepsilon>0$, there holds $\langle f^{\varepsilon}, g \rangle = \langle f, g^{\varepsilon} \rangle,$ where $\langle \cdot, \cdot \rangle$ denotes the dual bracket between spaces $\mathcal{D}'$ and $\mathcal{D}$.
\item[(ii)] Let $\varepsilon>0$, $\theta \in (0,1)$, $1 \le p \le \infty$ and $f \in L^1 \cap L^{\infty} \cap W^{\theta, p}(\mathbb{R}^n)$, there holds
\begin{align*}
\|f^{\varepsilon}\|_{L^p(\mathbb{R}^n)} \le \|f\|_{L^p(\mathbb{R}^n)} \ \mbox{ and } \ \|f^{\varepsilon}\|_{W^{\theta,p}(\mathbb{R}^n)} \le \|f\|_{W^{\theta,p}(\mathbb{R}^n)}.
\end{align*}
\item[iii)] Let $\theta \in (0,1)$ and $1 \le p \le \infty$. Then for any $f \in W^{\theta,p}(\mathbb{R}^n)$, there exists a constant $C$ such that
\begin{align}\label{eq:f_w}
\|f(\cdot - w) - f(\cdot)\|_{L^p(\mathbb{R}^n)} \le C |\upsilon|^{\theta} \|f\|_{W^{\theta,p}(\mathbb{R}^n)}, \ \forall  w \in \mathbb{R}^n.
\end{align}
\end{enumerate}
\end{lemma}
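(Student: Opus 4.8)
The plan is to treat the three items separately: (i) is the symmetry of the mollifier combined with the definition of convolution of a distribution with a test function; (ii) is Young's inequality plus Minkowski's integral inequality; and (iii) is a short covering/averaging argument. Items (ii) and (iii) are classical and, as noted, one may simply cite \cite[Proposition 4.2]{Cormick2013} or \cite{Alinhac}, but since the arguments are brief I sketch them as well. The only point requiring a small trick is (iii).

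For (i), the decisive observation is that $\phi_\varepsilon$ in \eqref{eq:varrho_eps} depends only on $|x|$, hence is even: $\phi_\varepsilon(-x)=\phi_\varepsilon(x)$. If $f\in\mathcal D(\mathbb R^n)$, then $\langle f^\varepsilon,g\rangle=\iint \phi_\varepsilon(x-y)f(y)g(x)\,dy\,dx$, and Fubini together with $\phi_\varepsilon(x-y)=\phi_\varepsilon(y-x)$ gives exactly $\iint \phi_\varepsilon(y-x)g(x)\,dx\,f(y)\,dy=\langle f,g^\varepsilon\rangle$. For a general $f\in\mathcal D'(\mathbb R^n)$ one uses that $f^\varepsilon=\phi_\varepsilon*f$ is the smooth function $x\mapsto\langle f,\phi_\varepsilon(x-\cdot)\rangle$ and that, by definition of distributional convolution, $\langle\phi_\varepsilon*f,g\rangle=\langle f,\check\phi_\varepsilon*g\rangle$ with $\check\phi_\varepsilon(\cdot):=\phi_\varepsilon(-\cdot)$; since $\check\phi_\varepsilon=\phi_\varepsilon$, the right-hand side is $\langle f,\phi_\varepsilon*g\rangle=\langle f,g^\varepsilon\rangle$ (note $g^\varepsilon\in\mathcal D(\mathbb R^n)$ because both $\phi_\varepsilon$ and $g$ are compactly supported).

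For (ii), the $L^p$ bound is Young's inequality: $\|f^\varepsilon\|_{L^p}=\|\phi_\varepsilon*f\|_{L^p}\le\|\phi_\varepsilon\|_{L^1}\|f\|_{L^p}=\|f\|_{L^p}$, since $\int_{\mathbb R^n}\phi_\varepsilon=1$ by the substitution $x\mapsto\varepsilon x$ and \eqref{eq:varrho}. For the Gagliardo seminorm, write $f^\varepsilon(x)-f^\varepsilon(y)=\int_{\mathbb R^n}\phi_\varepsilon(z)\big(f(x-z)-f(y-z)\big)\,dz$ and apply Minkowski's integral inequality in the measure $|x-y|^{-(n+\theta p)}\,dx\,dy$ to move $\phi_\varepsilon(z)\,dz$ outside the $L^p$ norm; the resulting inner double integral is the Gagliardo seminorm of the translate $f(\cdot-z)$, equal to that of $f$ by translation invariance of Lebesgue measure. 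Thus the seminorm of $f^\varepsilon$ is at most $\|\phi_\varepsilon\|_{L^1}=1$ times that of $f$, and adding the two estimates gives $\|f^\varepsilon\|_{W^{\theta,p}}\le\|f\|_{W^{\theta,p}}$. For $p=\infty$ one argues pointwise: $|f^\varepsilon(x)-f^\varepsilon(y)|\le\int\phi_\varepsilon(z)|f(x-z)-f(y-z)|\,dz$, which is at most the Hölder seminorm of $f$ times $|x-y|^\theta$.

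For (iii) with $1\le p<\infty$, put $\rho=|w|$ and let $m=x-\tfrac w2$ be the midpoint of $x-w$ and $x$, so that $|y-(x-w)|\le\tfrac32\rho$ and $|y-x|\le\tfrac32\rho$ for every $y\in B(m,\rho)$. From $|f(x-w)-f(x)|^p\le 2^{p-1}\big(|f(x-w)-f(y)|^p+|f(y)-f(x)|^p\big)$, averaging in $y$ over $B(m,\rho)$ and using the two distance bounds to insert the factors $|x-w-y|^{n+\theta p}$ and $|x-y|^{n+\theta p}$ into the numerators, one obtains
\[
|f(x-w)-f(x)|^p\le C\rho^{\theta p}\Big(\int_{B(m,\rho)}\tfrac{|f(x-w)-f(y)|^p}{|x-w-y|^{n+\theta p}}\,dy+\int_{B(m,\rho)}\tfrac{|f(x)-f(y)|^p}{|x-y|^{n+\theta p}}\,dy\Big),
\]
with $C$ depending only on $n,\theta,p$. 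Integrating over $x\in\mathbb R^n$ and substituting $z=y-x$ (respectively $z=y-x+w$), the variable $z$ ranges over a fixed ball of radius $\rho$ independent of $x$, so each term is dominated by $\iint_{\mathbb R^n\times\mathbb R^n}\frac{|f(x)-f(x+z)|^p}{|z|^{n+\theta p}}\,dz\,dx$, which equals the $p$-th power of the Gagliardo seminorm of $f$. Taking $p$-th roots yields $\|f(\cdot-w)-f(\cdot)\|_{L^p}\le C|w|^\theta\|f\|_{W^{\theta,p}}$; the case $p=\infty$ is immediate from the definition of $\|\cdot\|_{W^{\theta,\infty}}$, with $C=1$. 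I do not expect any serious obstacle here: the only delicate point is the choice of ball in (iii) — centered at the midpoint so that both difference points sit comfortably inside and so that the change of variables turns the $x$-integral into the full Gagliardo double integral — everything else being Young, Minkowski, Fubini and translation invariance of Lebesgue measure.
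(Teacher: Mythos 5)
Your proposal is correct, and it is worth noting that the paper itself offers no proof of this lemma at all: parts (ii) and (iii) are delegated to \cite[Proposition 4.2]{Cormick2013} and \cite{Alinhac}, and part (i) is left unproved as standard. What you supply is therefore a self-contained elementary argument filling in what the paper only cites. Your treatment of (i) (evenness of the radial mollifier plus the definition of distributional convolution) and of (ii) (Young for the $L^p$ part, Minkowski's integral inequality plus translation invariance for the Gagliardo seminorm, with the pointwise argument for $p=\infty$) is the standard route and is sound. The midpoint-averaging argument for (iii) is also correct and is the classical proof of the embedding $W^{\theta,p}\hookrightarrow$ the Nikolskii-type translation estimate: the choice of the ball $B(x-\tfrac{w}{2},|w|)$ indeed ensures both $|y-(x-w)|\le\tfrac{3}{2}|w|$ and $|y-x|\le\tfrac{3}{2}|w|$, the factor $\rho^{-n}\cdot\rho^{\,n+\theta p}=\rho^{\theta p}$ comes out as you say, and the change of variables $z=y-x$ (resp.\ $z=y-x+w$) ranges over a ball of radius $O(|w|)$ independent of $x$, so each term is dominated by the full Gagliardo double integral. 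The only blemish, inherited from the paper's own normalization in \eqref{eq:varrho}, is your claim that $\int_{\mathbb{R}^n}\phi_\varepsilon=1$: the stated condition $\int_{\mathbb{R}}\phi(\nu)\,d\nu=1$ makes $\int_{\mathbb{R}^n}\phi_\varepsilon(x)\,dx$ equal to the fixed constant $c_n\int_0^\infty\phi(r)r^{n-1}\,dr$, not necessarily $1$; this costs only a harmless dimensional constant in (ii) (or is fixed by the usual renormalization of $\phi$) and does not affect the substance of your argument.
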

We now derive some crucial properties via the following technical lemmas, that will play a critical role into deriving our main results (still hold for the endpoint case) in the rest of the paper.
\begin{lemma}\label{lem:1a} $ $
Let $\varepsilon>0$, $\theta \in (0,1)$ and $1 \le p \le \infty$. Then for any function $f$ belongs to ${L}^1\cap L^{\infty} \cap W^{\theta, p}(\mathbb{R}^n)$, there exists a constant $C$  such that
\begin{align}\label{eq:reg_f}
 \|f^{\varepsilon}-f\|_{L^p(\mathbb{R}^n)} \le C \varepsilon^{\theta} \left(\int_{\mathbb{R}^n}\int_{\mathbb{R}^n} \mathbbm{1}_{\varepsilon\le |x-y|\le 2\varepsilon} \frac{|f(x)-f(y)|^p}{|x-y|^{n+\theta p}} dx dy\right)^{\frac{1}{p}}, 
\end{align}
and
\begin{align}\label{eq:reg_gradf}
 \|\nabla f^{\varepsilon}\|_{L^p(\mathbb{R}^n)} \le C\varepsilon^{\theta-1} \left(\int_{\mathbb{R}^n}\int_{\mathbb{R}^n} \mathbbm{1}_{\varepsilon\le |x-y|\le 2\varepsilon} \frac{|f(x)-f(y)|^p}{|x-y|^{n+\theta p}} dx dy\right)^{\frac{1}{p}}.
\end{align}
\end{lemma}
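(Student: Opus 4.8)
The two estimates in Lemma~\ref{lem:1a} are standard mollification bounds, sharpened by restricting the Gagliardo double integral to the dyadic annulus $\varepsilon \le |x-y| \le 2\varepsilon$; this restriction is exactly what the support condition $\mbox{supp}(\phi) \subset [1,2]$ in~\eqref{eq:varrho} buys us. I will treat the case $1 \le p < \infty$ in detail and remark that $p = \infty$ is analogous with integrals replaced by suprema. For~\eqref{eq:reg_f}, I start from the identity
\begin{align*}
f^{\varepsilon}(x) - f(x) = \int_{\mathbb{R}^n} \phi_{\varepsilon}(x-y)\big(f(y) - f(x)\big)\,dy = \int_{\mathbb{R}^n} \phi_{\varepsilon}(z)\big(f(x-z) - f(x)\big)\,dz,
\end{align*}
using $\int \phi_{\varepsilon} = 1$. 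Since $\phi_{\varepsilon}(z) = \varepsilon^{-n}\phi(\varepsilon^{-1}|z|)$ is supported in $\{\varepsilon \le |z| \le 2\varepsilon\}$, I apply Minkowski's integral inequality in $L^p(dx)$ to get
\begin{align*}
\|f^{\varepsilon} - f\|_{L^p} \le \int_{\varepsilon \le |z| \le 2\varepsilon} \phi_{\varepsilon}(z)\,\|f(\cdot - z) - f(\cdot)\|_{L^p}\,dz.
\end{align*}
The remaining point is to bound $\|f(\cdot - z) - f(\cdot)\|_{L^p}$ for each fixed $z$ in that annulus by the restricted Gagliardo seminorm rather than the full one.

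**The core pointwise-to-seminorm step.** For a fixed $z$ with $\varepsilon \le |z| \le 2\varepsilon$, I want an estimate of the form
\begin{align*}
\|f(\cdot - z) - f(\cdot)\|_{L^p(\mathbb{R}^n)}^p \le C\,\varepsilon^{\theta p}\int_{\mathbb{R}^n}\int_{\mathbb{R}^n} \mathbbm{1}_{\varepsilon \le |x-y| \le 2\varepsilon}\,\frac{|f(x)-f(y)|^p}{|x-y|^{n+\theta p}}\,dx\,dy.
\end{align*}
This is the heart of the lemma and the step I expect to be most delicate, because the naive pointwise bound $|f(x-z)-f(x)| \le |f(x-z)-f(y)| + |f(y)-f(x)|$ only lets me compare to pairs at distance comparable to $|z|$, not exactly in $[\varepsilon,2\varepsilon]$. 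The standard device is to average over an auxiliary variable: for $x \in \mathbb{R}^n$ and an averaging ball $B = B(x, c\varepsilon)$ of an appropriately chosen radius (with $c$ small enough that both $|x-y|$ and $|x-z-y|$ land in a fixed multiple of $[\varepsilon, 2\varepsilon]$ for $y \in B$ — here one may need to shrink the support or absorb a harmless constant $C$ into the final bound, since the statement only claims the annulus $[\varepsilon,2\varepsilon]$ up to the constant $C$), write
\begin{align*}
f(x-z) - f(x) = \fint_{B}\big(f(x-z) - f(y)\big)\,dy - \fint_{B}\big(f(x) - f(y)\big)\,dy,
\end{align*}
raise to the $p$-th power, use Jensen, integrate in $x$, change variables, and recognize the restricted Gagliardo integral after bounding $|x-y|^{-(n+\theta p)} \asymp \varepsilon^{-(n+\theta p)}$ on the relevant annulus. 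The factor $\varepsilon^{n}$ from $|B|$ cancels $\varepsilon^{-n}$, leaving precisely $\varepsilon^{\theta p}$. Inserting this into the Minkowski bound and using $\int \phi_{\varepsilon} = 1$ yields~\eqref{eq:reg_f}.

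**The gradient estimate.** For~\eqref{eq:reg_gradf} the argument is parallel but starts from
\begin{align*}
\nabla f^{\varepsilon}(x) = \int_{\mathbb{R}^n} \nabla\phi_{\varepsilon}(x-y)\,f(y)\,dy = \int_{\mathbb{R}^n} \nabla\phi_{\varepsilon}(z)\big(f(x-z) - f(x)\big)\,dz,
\end{align*}
where the subtraction of $f(x)\int\nabla\phi_{\varepsilon} = 0$ is legitimate since $\nabla\phi_{\varepsilon}$ integrates to zero. Now $\nabla\phi_{\varepsilon}(z) = \varepsilon^{-n-1}(\nabla\phi)(\varepsilon^{-1}|z|)\,\tfrac{z}{|z|}$ is again supported in $\{\varepsilon \le |z| \le 2\varepsilon\}$ and satisfies $\int_{\mathbb{R}^n}|\nabla\phi_{\varepsilon}(z)|\,dz \le C\varepsilon^{-1}$. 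Applying Minkowski's inequality as before,
\begin{align*}
\|\nabla f^{\varepsilon}\|_{L^p} \le \int_{\varepsilon \le |z| \le 2\varepsilon} |\nabla\phi_{\varepsilon}(z)|\,\|f(\cdot - z) - f(\cdot)\|_{L^p}\,dz \le C\varepsilon^{-1}\cdot\varepsilon^{\theta}\Big(\text{restricted Gagliardo integral}\Big)^{1/p},
\end{align*}
using the same core step from the previous paragraph for $\|f(\cdot-z)-f(\cdot)\|_{L^p}$. This gives the claimed $\varepsilon^{\theta - 1}$ scaling and completes the proof. The only genuine obstacle is making the averaging-ball argument clean enough that the resulting pair-integral is honestly supported in $[\varepsilon, 2\varepsilon]$ (or a fixed dilate thereof, which is fine since an absolute constant $C$ is allowed); everything else is Minkowski's inequality plus bookkeeping of powers of $\varepsilon$.
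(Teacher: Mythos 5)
Your overall reduction via Minkowski's inequality is fine, but the step you yourself call the heart of the argument has a genuine gap, and it is in fact an unnecessary detour. First, the averaging-ball device does not do what you need: with $B=B(x,c\varepsilon)$, $c<1$, the pairs $(x,y)$, $y\in B$, appearing in the second average satisfy $|x-y|\le c\varepsilon<\varepsilon$, so they never lie in the annulus $\{\varepsilon\le|x-y|\le2\varepsilon\}$ and that term is simply not controlled by the restricted Gagliardo integral; likewise the pairs $(x-z,y)$ can reach distances up to about $|z|+c\varepsilon>2\varepsilon$. Second, your fallback --- that it suffices to land in ``a fixed dilate'' of the annulus because an absolute constant is allowed --- is not valid: the constant $C$ multiplies the integral, it does not enlarge its domain of integration, and for a general $f$ the Gagliardo integral over a dilated annulus is not dominated by a constant times the one over $[\varepsilon,2\varepsilon]$ without a further argument (one would need a chaining step, writing a translate of length outside $[\varepsilon,2\varepsilon]$ as a sum of translates with lengths inside it and averaging over the auxiliary increment), which you do not supply. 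So as written the proof of \eqref{eq:reg_f}, and hence of \eqref{eq:reg_gradf}, is incomplete precisely at the point you flagged as delicate.

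The fix is to not freeze $z$ at all, and this is essentially what the paper does. Either argue pointwise as in the paper: by H\"older, $|f^{\varepsilon}(y)-f(y)|\le C\varepsilon^{-n/p}\left(\int_{\mathbb{R}^n}\mathbbm{1}_{\varepsilon\le|x-y|\le2\varepsilon}|f(x)-f(y)|^p\,dx\right)^{1/p}$, integrate in $y$, and then use that on the support one has $|x-y|\le 2\varepsilon$, hence $\varepsilon^{-(n+\theta p)}\le C|x-y|^{-(n+\theta p)}$, which yields exactly \eqref{eq:reg_f}. Or, keeping your set-up, apply Jensen instead of a per-$z$ bound: $\|f^{\varepsilon}-f\|_{L^p}^p\le\int\phi_{\varepsilon}(z)\int|f(x-z)-f(x)|^p\,dx\,dz$, bound $\phi_{\varepsilon}(z)\le C\varepsilon^{-n}\le C\varepsilon^{\theta p}|z|^{-(n+\theta p)}$ on $\{\varepsilon\le|z|\le2\varepsilon\}$, and change variables $y=x-z$; this produces the Gagliardo integral restricted exactly to $\{\varepsilon\le|x-y|\le2\varepsilon\}$ with no fixed-$z$ estimate needed. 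The gradient bound then follows by the identical computation from $\int\nabla\phi_{\varepsilon}=0$, $\mathrm{supp}\,\nabla\phi_{\varepsilon}\subset\{\varepsilon\le|z|\le2\varepsilon\}$ and $|\nabla\phi_{\varepsilon}|\le C\varepsilon^{-n-1}$, which is how the paper obtains \eqref{eq:reg_gradf}.
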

\begin{proof}
For any $y \in \mathbb{R}^n$, by the definition of $\phi_{\varepsilon}$ in \eqref{eq:varrho_eps} and using H{\"o}lder inequality, we obtain the following estimation
\begin{align*}
|f^{\varepsilon}(y) - f(y)| &= \left|\int_{\mathbb{R}^n} \phi_{\varepsilon}(y-x) (f(x) - f(y)) dx\right| \\
& \le C\varepsilon^{-n}\int_{\mathbb{R}^n} \mathbbm{1}_{\varepsilon\le |x-y|\le 2\varepsilon}|f(x) - f(y)| dx \\
& \le C\varepsilon^{-\frac{n}{p}} \left(\int_{\mathbb{R}^n} \mathbbm{1}_{\varepsilon\le |x-y|\le 2\varepsilon}|f(x) - f(y)|^p dx\right)^{\frac{1}{p}},
\end{align*}
where the constant $C$ depends only on the function $\phi$ given in~\eqref{eq:varrho}. It follows that
\begin{align*}
\|f^{\varepsilon}-f\|_{L^p(\mathbb{R}^n)}^p \le C\varepsilon^{-n}  \int_{\mathbb{R}^n} \int_{\mathbb{R}^n} \mathbbm{1}_{\varepsilon\le |x-y|\le 2\varepsilon}|f(x) - f(y)|^p dx dy.
\end{align*}
Otherwise, by multiplying two sides of this inequality by $\varepsilon^{-\theta p}$, it gives
\begin{align*}
\varepsilon^{-\theta p}\|f^{\varepsilon}-f\|_{L^p(\mathbb{R}^n)}^p & \le \frac{C}{\varepsilon^{n+\theta p}} \int_{\mathbb{R}^n}\int_{\mathbb{R}^n} \mathbbm{1}_{\varepsilon\le |x-y|\le 2\varepsilon}|f(x) - f(y)|^p dx dy \\
& \le C \int_{\mathbb{R}^n}\int_{\mathbb{R}^n} \mathbbm{1}_{\varepsilon\le |x-y|\le 2\varepsilon} \frac{|f(x)-f(y)|^p}{|x-y|^{n+\theta p}} dx dy,
\end{align*}
which deduces the first inequality~\eqref{eq:reg_f}. In order to obtain the second estimation, it will be necessary to remark that
\begin{align*}
|\nabla f^{\varepsilon}(y)| & = \left|\int_{\mathbb{R}^n} \nabla\phi_{\varepsilon}(y-x) (f(x) - f(y)) dx\right|\\
& \le \frac{C}{\varepsilon^{n+1}}\int_{\mathbb{R}^n} \mathbbm{1}_{\varepsilon\le |x-y|\le 2\varepsilon}|f(x) - f(y)| dx.
\end{align*}
By the same proof of~\eqref{eq:reg_f}, we obtain~\eqref{eq:reg_gradf} the desired result.
\end{proof}

\begin{remark}\label{remk}
For all $f \in W^{\theta,p}(\mathbb{R}^n)$, one can see that
\begin{align*}
 \int_{\mathbb{R}^n} \int_{\mathbb{R}^n} \mathbbm{1}_{\varepsilon\le |x-y|\le 2\varepsilon} \frac{|f(x) - f(y)|^p}{|x-y|^{n + \theta p}} dx dy \le \|f\|_{W^{\theta,p}(\mathbb{R}^n)}.
\end{align*}
Therefore, as the consequences of Lemma~\ref{lem:1a}, one also obtains
\begin{align*}
 \|f^{\varepsilon}-f\|_{L^p(\mathbb{R}^n)} \le C \varepsilon^{\theta} \|f\|_{W^{\theta,p}(\mathbb{R}^n)} \ \mbox{ and } \ \|\nabla f^{\varepsilon}\|_{L^p(\mathbb{R}^n)} \le C\varepsilon^{\theta-1}  \|f\|_{W^{\theta,p}(\mathbb{R}^n)}.
\end{align*}
\end{remark}
For every function $f \in W^{\theta,p}(\mathbb{R}^n\times\mathbb{R}^n)$, we define a function $\Theta_f$ as
\begin{align}\label{eq:Theta}
\Theta_{f}(\varepsilon) := \left(\int_{\mathbb{R}^n}\int_{\mathbb{R}^n}\int_{\mathbb{R}^n} \mathbbm{1}_{\varepsilon\le |x-y|\le 2\varepsilon} \frac{|f(x,\varsigma)-f(y,\varsigma)|^p}{|x-y|^{n+\theta p}} dx dy d\varsigma\right)^{\frac{1}{p}},
\end{align}
the following Lemma is then stated and proved to give us a very important property related to this function.

\begin{corollary}\label{cor:1} Let $\theta\in (0,1)$, $1 \le p \le \infty$ and the function $f \in W^{\theta,p}(\mathbb{R}^n\times\mathbb{R}^n)$. Then for any $\varepsilon,\,\sigma>0$, there exists a constant $C$ such that
\begin{align}\label{eq:cor_a}
 \|\nabla_x f^{\varepsilon}(x,\varsigma-w) - & \nabla_x f^{\varepsilon}(x,\varsigma)\|_{L^p(\mathbb{R}^n\times\mathbb{R}^n)}  \le C \varepsilon^{\theta-1} |\upsilon|^{\theta}   \Theta_{f}(\varepsilon),
\end{align}
with $w \in \mathbb{R}^n$, and
\begin{align}\label{eq:cor_b}
\| (\nabla_x f^{\varepsilon})^{\sigma} - \nabla_x f^{\varepsilon}\|_{L^p(\mathbb{R}^n\times\mathbb{R}^n)} \le C \varepsilon^{\theta-1} \sigma^{\theta}   \Theta_{f}(\varepsilon),
\end{align}
where the function $\Theta_f$ is defined by~\eqref{eq:Theta}.
\end{corollary}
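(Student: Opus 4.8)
The plan is to derive \eqref{eq:cor_b} from \eqref{eq:cor_a}, and to obtain \eqref{eq:cor_a} by combining the refined mollification bound \eqref{eq:reg_gradf} of Lemma~\ref{lem:1a}, used in the space variable, with the Sobolev translation estimate \eqref{eq:f_w} of Lemma~\ref{lem:1}, used in the momentum variable. Concretely, for \eqref{eq:cor_a} I would exploit that mollification in $x$ commutes with translation in $\varsigma$: writing $g(x,\varsigma):=f(x,\varsigma-w)-f(x,\varsigma)$, one has $\nabla_x f^{\varepsilon}(x,\varsigma-w)-\nabla_x f^{\varepsilon}(x,\varsigma)=\nabla_x g^{\varepsilon}(x,\varsigma)$, where $g^{\varepsilon}$ now denotes the regularization of $g$ in the $x$-variable only. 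Applying \eqref{eq:reg_gradf} to $g(\cdot,\varsigma)$ for a.e.\ fixed $\varsigma$, raising to the power $p$ and integrating in $\varsigma$, I get $\|\nabla_x g^{\varepsilon}\|_{L^p(\mathbb{R}^n\times\mathbb{R}^n)}\le C\varepsilon^{\theta-1}\Theta_g(\varepsilon)$, with $\Theta_g$ the quantity \eqref{eq:Theta} formed from $g$ instead of $f$. Hence \eqref{eq:cor_a} reduces to the single inequality $\Theta_g(\varepsilon)\le C|w|^{\theta}\Theta_f(\varepsilon)$.

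For that inequality I would write $g(x,\varsigma)-g(y,\varsigma)=[f(x,\varsigma-w)-f(y,\varsigma-w)]-[f(x,\varsigma)-f(y,\varsigma)]$ and read it, for frozen $x,y$, as the $\varsigma$-increment by $w$ of the map $\varsigma\mapsto f(x,\varsigma)-f(y,\varsigma)$; then \eqref{eq:f_w} applied in $\varsigma$, followed by multiplication by $\mathbbm{1}_{\varepsilon\le|x-y|\le 2\varepsilon}\,|x-y|^{-n-\theta p}$ and integration in $(x,y)$, is meant to reproduce $\Theta_f(\varepsilon)$ at the cost of the factor $C|w|^{\theta}$, since the translation touches neither the $x$-cutoff at scale $\varepsilon$ nor the Gagliardo weight. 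I expect this to be the main obstacle: one must extract the two gains at once — the derivative loss $\varepsilon^{\theta-1}$ bookkept by $\Theta_f(\varepsilon)$ (which already uses the $x$-regularity at scale $\varepsilon$ through Lemma~\ref{lem:1a}) together with the modulus $|w|^{\theta}$ from the $\varsigma$-regularity — whereas $f$ carries only $\theta$ fractional derivatives in total, so a crude use of \eqref{eq:f_w} would consume that regularity twice and leave only the weaker estimate $C\varepsilon^{-1}|w|^{\theta}\|f\|_{W^{\theta,p}}$, losing both the extra $\varepsilon^{\theta}$ and the localized quantity $\Theta_f(\varepsilon)$. The care needed is to keep the $x$-difference frozen on the dyadic shell $\{\varepsilon\le|x-y|\le 2\varepsilon\}$ while transferring only the $\varsigma$-shift onto the difference quotient, so that the weight remains $\approx\varepsilon^{-n-\theta p}$, which is precisely what $\Theta_f(\varepsilon)$ encodes.

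Granted \eqref{eq:cor_a}, estimate \eqref{eq:cor_b} is immediate: since $\int\phi_{\sigma}=1$,
\begin{align*}
(\nabla_x f^{\varepsilon})^{\sigma}(x,\varsigma)-\nabla_x f^{\varepsilon}(x,\varsigma)=\int_{\mathbb{R}^n}\phi_{\sigma}(w)\left(\nabla_x f^{\varepsilon}(x,\varsigma-w)-\nabla_x f^{\varepsilon}(x,\varsigma)\right)dw,
\end{align*}
so by Minkowski's integral inequality and \eqref{eq:cor_a},
\begin{align*}
\|(\nabla_x f^{\varepsilon})^{\sigma}-\nabla_x f^{\varepsilon}\|_{L^p(\mathbb{R}^n\times\mathbb{R}^n)}\le C\varepsilon^{\theta-1}\Theta_f(\varepsilon)\int_{\mathbb{R}^n}\phi_{\sigma}(w)|w|^{\theta}\,dw\le C\varepsilon^{\theta-1}\sigma^{\theta}\Theta_f(\varepsilon),
\end{align*}
where in the last step the support condition $\mathrm{supp}\,\phi_{\sigma}\subset\{|w|\le 2\sigma\}$ bounds $\int\phi_{\sigma}(w)|w|^{\theta}\,dw\le(2\sigma)^{\theta}$, the constant $2^{\theta}$ being absorbed into $C$.
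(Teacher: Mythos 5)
Your derivation of \eqref{eq:cor_b} from \eqref{eq:cor_a}, and your reduction of \eqref{eq:cor_a} to the inequality $\Theta_g(\varepsilon)\le C|w|^{\theta}\Theta_f(\varepsilon)$ for $g(x,\varsigma):=f(x,\varsigma-w)-f(x,\varsigma)$ (commuting the $x$-mollification with the $\varsigma$-translation and applying \eqref{eq:reg_gradf} slice-wise in $x$), are fine. But that last inequality is the entire content of the corollary, and you never prove it; the route you sketch cannot yield it. Applying \eqref{eq:f_w} in $\varsigma$ to $\varsigma\mapsto f(x,\varsigma)-f(y,\varsigma)$ with $(x,y)$ frozen gives $C|w|^{\theta}\,\|f(x,\cdot)-f(y,\cdot)\|_{W^{\theta,p}(\mathbb{R}^n)}$, so after multiplying by the shell cutoff and the weight $|x-y|^{-n-\theta p}$ and integrating in $(x,y)$ you do not reproduce $\Theta_f(\varepsilon)$: you obtain a mixed quantity carrying a $\theta$-order difference quotient in $x$ and in $\varsigma$ simultaneously, and such dominating mixed regularity is not controlled by $\|f\|_{W^{\theta,p}(\mathbb{R}^n\times\mathbb{R}^n)}$, which provides only $\theta$ derivatives jointly. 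Moreover, no bookkeeping care on the dyadic shell can repair this, because the reduction target with a constant independent of $f$ is simply false: test it on a tensor product $f(x,\varsigma)=a(x)b(\varsigma)$. Then $\Theta_g(\varepsilon)=\|b(\cdot-w)-b\|_{L^p(\mathbb{R}^n)}\,\Theta_a(\varepsilon)$ while $\Theta_f(\varepsilon)=\|b\|_{L^p(\mathbb{R}^n)}\,\Theta_a(\varepsilon)$ (with $\Theta_a$ the analogous one-variable quantity), so the claim would force $\|b(\cdot-w)-b\|_{L^p}\le C|w|^{\theta}\|b\|_{L^p}$ for every $b\in W^{\theta,p}(\mathbb{R}^n)$, which fails when $b$ oscillates on a scale much smaller than $|w|$; note that in the application of the corollary one has $|w|\lesssim\sigma$ with $\sigma\gg\varepsilon$, so this is the relevant regime. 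The obstacle you candidly flag --- that only $\theta$ fractional derivatives are available while the estimate harvests both the gain $\varepsilon^{\theta}$ beyond $\varepsilon^{-1}$ and the gain $|w|^{\theta}$ --- is thus real, and your proposal does not overcome it.

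For comparison, the paper argues in the opposite order: it first applies the translation estimate \eqref{eq:f_w} in the $\varsigma$-variable directly to $\nabla_x f^{\varepsilon}$, obtaining $C|w|^{\theta}\|\nabla_x f^{\varepsilon}\|_{L^p(\mathbb{R}^n;W^{\theta,p}(\mathbb{R}^n))}$, and then invokes \eqref{eq:reg_gradf} to bound this mixed norm by $\varepsilon^{\theta-1}\Theta_f(\varepsilon)$; the bound \eqref{eq:cor_b} is treated identically using Remark~\ref{remk} in place of \eqref{eq:f_w}, so your Minkowski derivation of \eqref{eq:cor_b} from \eqref{eq:cor_a} is a harmless variant. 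The step you are missing is precisely the one the paper dispatches by that second application of \eqref{eq:reg_gradf}; observe, however, that \eqref{eq:reg_gradf} as proved controls only the $L^p$-norm of the gradient of a mollification taken in the same variable as the difference quotient, not an $x$-mollified gradient measured in an $W^{\theta,p}_{\varsigma}$-valued norm, so the mixed-regularity issue your reduction exposes is present at that point of the paper's argument as well and cannot be imported to close your gap.
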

\begin{proof}
From Lemma~\ref{lem:1}, there exists a constant $C$ such that
\begin{align*}
 \|\nabla_x f^{\varepsilon}(x,\varsigma-w) - & \nabla_x f^{\varepsilon}(x,\varsigma)\|_{L^p(\mathbb{R}^n\times\mathbb{R}^n)}  \le C  |\upsilon|^{\theta} \|\nabla_x f^{\varepsilon}\|_{L^p(\mathbb{R}^n;W^{\theta,p}(\mathbb{R}^n))}.
\end{align*}
The inequality~\eqref{eq:reg_gradf} in Lemma~\ref{lem:1a} is then applied to get
\begin{align*}
 \|\nabla_x f^{\varepsilon}(x,\varsigma-w) - & \nabla_x f^{\varepsilon}(x,\varsigma)\|_{L^p(\mathbb{R}^n\times\mathbb{R}^n)}  \le C \varepsilon^{\theta-1} |\upsilon|^{\theta}   \Theta_{f}(\varepsilon).
\end{align*}
To deal with the second estimation~\eqref{eq:cor_b}, by what obtained in Lemma~\ref{lem:1a} and Remark~\ref{remk}, there exists a constant $C$ such that
\begin{align*}
 \| (\nabla_x f^{\varepsilon})^{\sigma} - \nabla_x f^{\varepsilon}\|_{L^p(\mathbb{R}^n\times\mathbb{R}^n)} \le C \sigma^{\theta} \|\nabla_x f^{\varepsilon}\|_{L^p(\mathbb{R}^n;W^{\theta,p}(\mathbb{R}^n))}.
\end{align*}
Repeated application of the inequality~\eqref{eq:reg_gradf} in Lemma~\ref{lem:1a} enables us to write
\begin{align*}
 \| (\nabla_x f^{\varepsilon})^{\sigma} - \nabla_x f^{\varepsilon}\|_{L^p(\mathbb{R}^n\times\mathbb{R}^n)} \le C \varepsilon^{\theta-1} \sigma^{\theta} \Theta_{f}(\varepsilon),
\end{align*}
and the proof is complete.
\end{proof}

\begin{lemma}\label{lem:omega}
Let $\theta \in (0,1)$, $1 \le p \le \infty$ and the function $f \in L^{1}(0,T;W^{\theta,p}(\mathbb{R}^n\times\mathbb{R}^n))$. Then $\omega_f(\varepsilon,\sigma)$ defined by 
\begin{align}\label{def:omega}
\omega_f(\varepsilon,\sigma) := \int_0^T (\Theta_{f(t)}(\varepsilon) + \Theta_{f(t)}(\sigma))dt,
\end{align}
vanishes as $\varepsilon$ and $\sigma$ tend to 0.
\end{lemma}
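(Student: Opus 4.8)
The plan is to deduce the vanishing of $\omega_f(\varepsilon,\sigma)$ from two successive applications of the dominated convergence theorem: first in the phase--space variables, for each fixed time slice, and then in the time variable. First I would fix a time $t\in[0,T]$ for which $f(t,\cdot,\cdot)\in W^{\theta,p}(\mathbb{R}^n\times\mathbb{R}^n)$; by the hypothesis $f\in L^1(0,T;W^{\theta,p})$ this holds for almost every $t$. Writing out $\Theta_{f(t)}(\varepsilon)$ from~\eqref{eq:Theta}, the integrand is $\mathbbm{1}_{\varepsilon\le |x-y|\le 2\varepsilon}\,G_t(x,y,\varsigma)$ with
$$
G_t(x,y,\varsigma) := \frac{|f(t,x,\varsigma)-f(t,y,\varsigma)|^p}{|x-y|^{n+\theta p}},
$$
which is nonnegative, independent of $\varepsilon$, and has finite integral over $\mathbb{R}^n\times\mathbb{R}^n\times\mathbb{R}^n$ because that integral is controlled by $\|f(t)\|_{W^{\theta,p}(\mathbb{R}^n\times\mathbb{R}^n)}$, as in Remark~\ref{remk} (applied to the $\varsigma$--slices and then integrated in $\varsigma$). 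For any fixed $x\neq y$ the cutoff $\mathbbm{1}_{\varepsilon\le |x-y|\le 2\varepsilon}$ vanishes as soon as $\varepsilon<|x-y|/2$, so $\mathbbm{1}_{\varepsilon\le |x-y|\le 2\varepsilon}G_t\to 0$ pointwise almost everywhere as $\varepsilon\to 0$. Dominated convergence with majorant $G_t$ then gives $\Theta_{f(t)}(\varepsilon)\to 0$, and identically $\Theta_{f(t)}(\sigma)\to 0$, as $\varepsilon,\sigma\to 0$.

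Next I would carry out the time integration. By Remark~\ref{remk} one has the uniform bound $\Theta_{f(t)}(\varepsilon)+\Theta_{f(t)}(\sigma)\le 2\,\|f(t)\|_{W^{\theta,p}(\mathbb{R}^n\times\mathbb{R}^n)}$ for all $\varepsilon,\sigma>0$, and the right--hand side lies in $L^1(0,T)$ by assumption on $f$. Combining this $\varepsilon,\sigma$--independent majorant with the pointwise--in--$t$ convergence established in the previous step, a second application of the dominated convergence theorem yields
$$
\omega_f(\varepsilon,\sigma)=\int_0^T\bigl(\Theta_{f(t)}(\varepsilon)+\Theta_{f(t)}(\sigma)\bigr)\,dt\ \longrightarrow\ 0
$$
as $\varepsilon,\sigma\to 0$, which is exactly the assertion of the lemma.

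I expect the only delicate point to be the domination step, namely the claim that $G_t$ is integrable over the full product space $\mathbb{R}^{3n}$ for almost every $t$ and with $L^1(0,T)$--summable time profile; this rests on the slicing property that a function of $W^{\theta,p}(\mathbb{R}^n\times\mathbb{R}^n)$ has almost every $\varsigma$--section in $W^{\theta,p}(\mathbb{R}^n)$ with an $L^p_\varsigma$--summable Gagliardo seminorm, bounded by the full norm, and this is precisely the estimate already used implicitly in Remark~\ref{remk} and in Corollary~\ref{cor:1}. For $1\le p<\infty$ the argument above is complete; when $p=\infty$ the quantity $\Theta_{f}$ is read with the essential supremum in place of the integral average, and the same monotone vanishing of the local seminorm at scale $\varepsilon$ together with the uniform bound by $\|f(t)\|_{W^{\theta,\infty}}$ applies verbatim, consistently with the freedom in choosing $r$ in~\eqref{cond:pq}.
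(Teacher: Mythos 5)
Your argument is correct and is essentially the paper's own proof: both bound $\Theta_{f(t)}(\varepsilon)$ by $\|f(t)\|_{W^{\theta,p}(\mathbb{R}^n\times\mathbb{R}^n)}$ via Remark~\ref{remk} and apply dominated convergence twice, first in the phase-space variables (the cutoff $\mathbbm{1}_{\varepsilon\le|x-y|\le 2\varepsilon}$ vanishes pointwise off the diagonal, dominated by the full Gagliardo integrand) and then in $t$ (with the $\varepsilon,\sigma$-independent majorant in $L^1(0,T)$), your version merely being more explicit about the slicing and a.e.-$t$ details. The only caveat is your closing claim that the $p=\infty$ case goes through verbatim: the localized H\"older seminorm at scale $\varepsilon$ need not tend to $0$ as $\varepsilon\to 0$ (e.g.\ $f(x)=|x|^{\theta}$ near the origin), though the paper's proof likewise does not address $p=\infty$, for which $\Theta_f$ as written is not even defined.
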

\begin{proof}
By the definition of $\Theta_f$ in~\eqref{eq:Theta} and Remark~\ref{remk}, we have
\begin{align*}
\Theta_{f(t)}(\varepsilon) \le \|f(t,\cdot,\cdot)\|_{W^{\theta,p}(\mathbb{R}^n\times\mathbb{R}^n)} < \infty, \quad \forall t \in [0,T].
\end{align*}
Apply Lebesgue dominated convergence theorem, it is clear that $\Theta_{f(t)}(\varepsilon)$ tends to 0 as passing $\varepsilon$ goes to 0 for all $t \in [0,T]$. The same conclusion is obtained for $\Theta_{f(t)}(\sigma)$, and this guarantees that $\omega_f(\varepsilon,\sigma)$ given by~\eqref{def:omega} vanishes as $(\varepsilon, \sigma)$ goes to 0. 
\end{proof}

\section{Proof of Theorem~\ref{theo:main}}\label{sec:proof}
In this section, we consider a global in time weak solution $(u,E,B)$ of Vlasov-Maxwell equations. The weak formulation for the Vlasov equation~\eqref{eq:sys1} reads
$$
 \int_0^T dt \int_{\mathbb{R}^3}dx \int_{\mathbb{R}^3}  u(\partial_t \varphi + v\cdot \nabla_x \varphi + \mathcal{F} \cdot \nabla_{\varsigma}\varphi)  \, d\varsigma = 0,
$$
for all $\varphi \in \mathcal{D}((0,T)\times \mathbb{R}^6)$. Let us choose a test function $\varphi$ as follows
$$
\varphi =  (\mathcal{G}'(u^{\gamma,\varepsilon,\sigma})\psi)^{\gamma,\varepsilon,\sigma} \in \mathcal{D}((0,T)\times \mathbb{R}^6),
$$
where $\psi \in \mathcal{D}((0,T)\times \mathbb{R}^6)$ and $\mathcal{G} \in \mathcal{C}^1(\mathbb{R}^+;\mathbb{R}^+)$. Integrating by parts this weak formulation yields that for all $\psi \in \mathcal{D}((0,T)\times \mathbb{R}^6)$, there holds
\begin{multline} \label{eq:explain1}
\int_0^T dt \int_{\mathbb{R}^3}dx \int_{\mathbb{R}^3} \, d\varsigma  \mathcal{G}(u^{\gamma,\varepsilon,\sigma})\left(\partial_t \psi + v^{\sigma}\cdot \nabla_x \psi + \mathcal{F}^{\gamma,\varepsilon,\sigma}\cdot \nabla_{\varsigma} \psi\right) \\ + \psi \mathcal{G}'(u^{\gamma,\varepsilon,\sigma}) \left[\nabla_x \cdot \left((vu)^{\gamma,\varphi,\sigma}-v^{\sigma}u^{\gamma,\varepsilon,\sigma}\right) + \nabla_{\varsigma} \cdot\left((\mathcal{F}u)^{\gamma,\varepsilon,\sigma}-\mathcal{F}^{\gamma,\varepsilon,\sigma}u^{\gamma,\varepsilon,\sigma}\right)\right]  = 0.
\end{multline}

Following the renormalization property of solution $(u,E,B)$, it is sufficient to show that the second term in the left hand side of~\eqref{eq:explain1} vanishes as $(\gamma,\varepsilon,\sigma)$ tends to 0, for all $\psi \in \mathcal{D}((0,T)\times \mathbb{R}^6)$. To do so, we firstly establish some commutator estimations which are presented in the next lemma. For simplicity, the problem is considered with $\theta \in (0,1)$ and $1 \le p, r \le \infty$, with $n = 3$ or $n = 6$ and $s = 1$ or $s = \infty$, we will use the following notations in the remain part of our paper, 
\begin{align*}
 \mathcal{L}^{s,p}_n := L^s(0,T;L^p(\mathbb{R}^n)), & \ \ \mathcal{L}^{s,p,r} := L^s(0,T;L^p(\mathbb{R}^3;L^r(\mathbb{R}^3))),\\
 \mathcal{L}^{s}\mathcal{W}^{\theta,p}_n := L^s(0,T;W^{\theta,p}(\mathbb{R}^n)), & \ \  \mathcal{L}^{s,p}\mathcal{W}^{\theta,p} := L^s(0,T;L^p(\mathbb{R}^3;W^{\theta,p}(\mathbb{R}^3))).
\end{align*}

Following the idea of the proofs in \cite{Bardos}, it is worth emphasizing that in our proofs of Lemma \ref{lem:2} and Theorem \ref{theo:main} below, we improve the earlier version (with the endpoint case included) based on technical lemmas \ref{lem:1a}, \ref{cor:1} and \ref{lem:omega} in previous section.

\begin{lemma}\label{lem:2}
Let $(u,E,B)$ be a weak solution of the relativistic Vlasov-Maxwell system \eqref{eq:sys1}-\eqref{eq:sys3} given by Theorem~\ref{theo:1}, satisfying the regularity assumptions~\eqref{cond:fEB} of Theorem~\ref{theo:main}, with $\theta, \kappa \in (0,1)$ and $p, q, r$ satisfy relations \eqref{cond:pq}. Then for any positive numbers $\gamma, \varepsilon, \sigma >0$, there exists a constant $C>0$ depending only on the smooth function $\phi$ given by~\eqref{eq:varrho} such that
\begin{align}\label{eq:Cfs}
\left\|\nabla_x \cdot \left((vu)^{\gamma,\varepsilon,\sigma}-v^{\sigma}u^{\gamma,\varepsilon,\sigma}\right)\right\|_{ \mathcal{L}^{1,p}_6} \le C \varepsilon^{\theta-1} \sigma^{\theta+1} \omega_u(\varepsilon,\sigma).
\end{align}
Moreover, there exists a constant $C_{\mathcal{F}}>0$ depending on $\phi$, $\|u\|_{\mathcal{L}^{1}\mathcal{W}^{\theta,p}_6}$, $\|E\|_{\mathcal{L}^{\infty}\mathcal{W}^{\kappa,q}_3}$ and $\|B\|_{\mathcal{L}^{\infty}\mathcal{W}^{\kappa,q}_3}$ such that
\begin{align}\label{eq:Cfl}
\left\|\nabla_\varsigma \cdot \left((\mathcal{F}u)^{\gamma,\varepsilon,\sigma}-\mathcal{F}^{\gamma,\varepsilon,\sigma}u^{\gamma,\varepsilon,\sigma}\right)\right\|_{\mathcal{L}^{1,p,r}}  \le C_{\mathcal{F}} \left(\varepsilon^{\theta+\kappa} \sigma^{\theta-1}\omega_u(\varepsilon,\sigma)  + \sigma^{\theta}\right),
\end{align}
where $\mathcal{F}: = E + v \times B$ is the Lorentz force field and the function $\omega_u$ is given by~\eqref{def:omega}.
\end{lemma}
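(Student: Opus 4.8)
The plan is to split each commutator into a part coming from the mollification in the $\varsigma$ (resp.\ $x$) variable and a part coming from the difference $v-v^\sigma$ or $\mathcal F-\mathcal F^{\gamma,\varepsilon,\sigma}$, and then to bound each piece using the refined regularization estimates of Lemma~\ref{lem:1a}, Corollary~\ref{cor:1}, and the auxiliary velocity bounds of Lemma~\ref{lem:1b}. For the first estimate \eqref{eq:Cfs}, I would write $\nabla_x\cdot\big((vu)^{\gamma,\varepsilon,\sigma}-v^{\sigma}u^{\gamma,\varepsilon,\sigma}\big)$ as a convolution in $\varsigma$ of $\nabla_x u^{\gamma,\varepsilon}$ against $\phi_\sigma$ paired with the increment $v(\varsigma-w)-v(\varsigma)$, i.e.\ reduce to controlling $\int_{\mathbb R^3}\phi_\sigma(w)\,\big(v(\varsigma-w)-v(\varsigma)\big)\big(\nabla_x u^{\gamma,\varepsilon}(t,x,\varsigma-w)-\nabla_x u^{\gamma,\varepsilon}(t,x,\varsigma)\big)\,dw$ (the part where $\nabla_x u^{\gamma,\varepsilon}$ is not differenced integrates to zero against the symmetric mollifier up to the $v$-increment). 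Then \eqref{eq:est_v} gives the factor $|w|\le 2\sigma$ from $|v(\varsigma-w)-v(\varsigma)|$, Corollary~\ref{cor:1}\,\eqref{eq:cor_a} gives $\|\nabla_x u^{\gamma,\varepsilon}(\cdot-w)-\nabla_x u^{\gamma,\varepsilon}\|_{L^p}\le C\varepsilon^{\theta-1}|w|^{\theta}\Theta_{u}(\varepsilon)$, and integrating $\phi_\sigma(w)|w|^{1+\theta}\,dw\lesssim\sigma^{1+\theta}$ in $w$ produces the claimed $\varepsilon^{\theta-1}\sigma^{\theta+1}$, while integrating in $t$ turns $\Theta_{u(t)}(\varepsilon)$ into $\omega_u(\varepsilon,\sigma)$ via \eqref{def:omega}.

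For the second estimate \eqref{eq:Cfl} I would decompose $\mathcal F=E+v\times B$ and handle the $E$-commutator and the $v\times B$-commutator separately. The $E$-part, $\nabla_\varsigma\cdot\big((Eu)^{\gamma,\varepsilon,\sigma}-E^{\gamma,\varepsilon,\sigma}u^{\gamma,\varepsilon,\sigma}\big)$, is the classical DiPerna–Lions commutator in the $(t,x)$ variables: since $E$ does not depend on $\varsigma$ and the $\varsigma$-mollification commutes with $\nabla_\varsigma$, after the product rule the leading term is $\big(E^{\gamma,\varepsilon}\nabla_\varsigma u^{\gamma,\varepsilon,\sigma}\big)^{\!*}-E^{\gamma,\varepsilon,\sigma}\nabla_\varsigma u^{\gamma,\varepsilon,\sigma}$ plus a term with $u\,\mathrm{div}_\varsigma E=0$; writing it as $\int\phi_\varepsilon(y)\phi_\sigma(w)\big(E^{\gamma}(x-y)-E^{\gamma}(x)\big)\big(\nabla_\varsigma u^{\gamma}(\cdots)\big)$-type differences, one uses the $W^{\kappa,q}$ modulus of continuity of $E$ (Lemma~\ref{lem:1}(iii)) to extract $\varepsilon^{\kappa}$, Lemma~\ref{lem:1a}\,\eqref{eq:reg_gradf} in $\varsigma$ to extract $\sigma^{\theta-1}\Theta_{u}(\sigma)$, and Hölder with $1/p+1/q=1/r$ to close in $\mathcal L^{1,p,r}$, giving a contribution $\lesssim\varepsilon^{\kappa}\sigma^{\theta-1}\,\|E\|_{\mathcal L^\infty\mathcal W^{\kappa,q}_3}\,\omega_u(\varepsilon,\sigma)$. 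The magnetic part $v\times B$ carries the extra $v$-dependence; here one further splits off $v^\sigma\times B$ from $v\times B$, the difference $|v-v^\sigma|\le 4\sigma$ (Lemma~\ref{lem:1b}\,\eqref{eq:est_v_del}) combined with the uniform bounds on $B$ and $u$ producing the isolated $\sigma^{\theta}$ term (in fact an $O(\sigma)$ term, absorbed into $C_{\mathcal F}\sigma^\theta$ since $\theta<1$ and $\sigma$ small), and the remaining $v^\sigma\times B$-commutator is estimated exactly like the $E$-commutator, contributing another $\varepsilon^{\theta+\kappa}\sigma^{\theta-1}\omega_u$-type term; collecting the $\varepsilon$-powers and taking the worse of $\varepsilon^{\kappa},\varepsilon^{\theta+\kappa}$ consistently with the statement yields \eqref{eq:Cfl}.

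The bookkeeping subtlety — and the main obstacle — is making sure every increment that is \emph{not} killed by the symmetry of the mollifier is matched against a factor that vanishes at the right rate, so that no term with a \emph{positive} power deficit survives; concretely, the endpoint case $\theta\kappa+\kappa+3\theta-1=0$ is exactly the borderline where, after one later chooses $\gamma,\varepsilon,\sigma\to0$ along a suitable relation (e.g.\ $\gamma\sim\sigma\sim\varepsilon^{a}$ for an appropriate exponent $a$ tied to $\theta,\kappa$), the product of the algebraic prefactors $\varepsilon^{\theta-1}\sigma^{\theta+1}$ and $\varepsilon^{\theta+\kappa}\sigma^{\theta-1}$ in \eqref{eq:Cfs}–\eqref{eq:Cfl} stays bounded while $\omega_u(\varepsilon,\sigma)\to0$ by Lemma~\ref{lem:omega}. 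In the strict-inequality case of \cite{Bardos} one has room to spare and can afford the cruder bound $\Theta_{u(t)}(\varepsilon)\le\|u(t)\|_{W^{\theta,p}}$; the improvement here is precisely that Lemma~\ref{lem:1a} localizes the Gagliardo seminorm to the annulus $\varepsilon\le|x-y|\le2\varepsilon$, so the prefactors are \emph{exactly} the critical powers with no loss, and it is this localized quantity $\omega_u(\varepsilon,\sigma)$ — rather than a fixed norm — that provides the decisive vanishing. I would therefore present the proof of Lemma~\ref{lem:2} keeping the annular seminorms $\Theta$ explicit throughout and only invoking Lemma~\ref{lem:omega} at the very end of the proof of Theorem~\ref{theo:main}.
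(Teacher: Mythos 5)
Your treatment of the free-streaming estimate \eqref{eq:Cfs} is essentially the paper's argument (a double-increment kernel in $\varsigma$ plus a remainder of the form $(u^{\gamma,\varepsilon,\sigma}-u^{\gamma,\varepsilon})(v-v^{\sigma})$; note this remainder does not ``integrate to zero'' but is controlled by \eqref{eq:est_v_del} together with \eqref{eq:cor_b}, giving the same $\varepsilon^{\theta-1}\sigma^{\theta+1}$). The genuine problems are in \eqref{eq:Cfl}. For the electric part you extract only $\varepsilon^{\kappa}$ from the increment of $E$, pair it with an undifferenced $\nabla_{\varsigma}u^{\sigma}$, and then propose to ``take the worse of $\varepsilon^{\kappa},\varepsilon^{\theta+\kappa}$''. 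But the worse power is $\varepsilon^{\kappa}$, and the resulting bound $\varepsilon^{\kappa}\sigma^{\theta-1}\omega_u$ is strictly weaker than the stated $\varepsilon^{\theta+\kappa}\sigma^{\theta-1}\omega_u$: with the later choice $\sigma^{2}=\varepsilon^{\kappa+1}$ it equals $\varepsilon^{(\theta\kappa+\kappa+\theta-1)/2}\omega_u=\varepsilon^{-\theta}\omega_u$ at the endpoint $\theta\kappa+\kappa+3\theta-1=0$, which blows up, so your version of the lemma would not close the main proof. The extra $\varepsilon^{\theta}$ comes precisely from keeping the full commutator structure $T_E=\mathcal{K}_{\gamma,\varepsilon}(E,u^{\sigma})-(E-E^{\gamma,\varepsilon})\,(u^{\sigma}-(u^{\sigma})^{\gamma,\varepsilon})$, in which \emph{both} factors carry a $(t,x)$-increment: the increment of $E$ gives $|y|^{\kappa}$ and the increment of $\nabla_{\varsigma}u^{\sigma}$ in $x$ gives $|y|^{\theta}$ via \eqref{eq:f_w}, before \eqref{eq:reg_gradf} produces the factor $\sigma^{\theta-1}\Theta_{u}(\sigma)$. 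A single difference cannot yield \eqref{eq:Cfl}.

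The magnetic part has a second missing idea. Splitting off $(v-v^{\sigma})$ and declaring the remainder ``exactly like the $E$-commutator'' ignores that multiplication by $v(\varsigma)$ does not commute with the mollification in $\varsigma$. After removing $T_{B3}=(v-v^{\sigma})\times B^{\gamma,\varepsilon}(u^{\sigma})^{\gamma,\varepsilon}$ and the genuinely $E$-like $x$-commutator $T_{B2}=v\times\bigl((Bu^{\sigma})^{\gamma,\varepsilon}-B^{\gamma,\varepsilon}(u^{\sigma})^{\gamma,\varepsilon}\bigr)$, there remains the term $T_{B1}$ built from $(v(\varsigma-w)-v(\varsigma))\times B$ convolved against $\phi_{\sigma}$. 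When $\nabla_{\varsigma}$ falls on $\phi_{\sigma}$, the piece in which $u$ is \emph{not} differenced has size $|\nabla\phi_{\sigma}|\,|w|\,\|B\|\,\|u\|=O(1)$ and does not vanish by size alone; the paper annihilates it ($I_1=0$) by integrating by parts in $w$ and using $\nabla_{w}\cdot\bigl[(v(\varsigma-w)-v(\varsigma))\times B\bigr]=0$, i.e.\ the divergence-free structure of the Lorentz force in momentum ($v$ is a gradient, so its curl vanishes). Your sketch never invokes this cancellation, and without it the isolated $\sigma^{\theta}$ term in \eqref{eq:Cfl} cannot be reached. (Also, the $T_{B3}$ contribution is not $O(\sigma)$: the $\varsigma$-gradient of the mollified $u$ costs $\sigma^{\theta-1}$, so its true size is $\sigma\cdot\sigma^{\theta-1}=\sigma^{\theta}$, exactly as recorded in the statement.)
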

\begin{proof}
We first consider the commutator estimate~\eqref{eq:Cfs} for the free streaming term. It is easy to check that
\begin{align}\label{eq:est6}
(vu)^{\gamma,\varepsilon,\sigma} - v^{\sigma} u^{\gamma,\varepsilon,\sigma} = \mathcal{K}_{\sigma}(v,u^{\gamma,\varepsilon})  - (u^{\gamma,\varepsilon,\sigma}-u^{\gamma,\varepsilon})(v-v^{\sigma}),
\end{align}
where $\mathcal{K}_{\sigma}$ is defined by
\begin{align}\label{eq:est5}
\mathcal{K}_{\sigma}(v,g)(t,x,\varsigma) =  \int_{\mathbb{R}^3}   \phi_{\sigma}(\upsilon) \left(v(\varsigma-w)-v(\varsigma)\right)\left(g(t,x,\varsigma-w)-g(t,x,\varsigma)\right) d\upsilon.
\end{align}
Passing to the limit $\gamma \to 0$ on the right hand side of~\eqref{eq:est6} which can be justified by the Lebesgue dominated convergence theorem and regularity assumptions~\eqref{cond:fEB}, we thus get that
\begin{align*}
\left\|\nabla_x \cdot \left((vu)^{\gamma,\varepsilon,\sigma}-v^{\sigma}u^{\gamma,\varepsilon,\sigma}\right)\right\|_{ \mathcal{L}^{1,p}_6} \le \mathcal{A}_1 + \mathcal{A}_2, 
\end{align*}
where $ \mathcal{A}_1 = \|\nabla_x \cdot \mathcal{K}_{\sigma}(v,u^{\varepsilon})\|_{ \mathcal{L}^{1,p}_6}$  and  $\mathcal{A}_2 = \|\nabla_x \cdot ((u^{\varepsilon,\sigma}-u^{\varepsilon})  (v-v^{\sigma}))  \|_{ \mathcal{L}^{1,p}_6}$.
By the definition of $\mathcal{K}_{\sigma}$ in~\eqref{eq:est5}, and in the use of~\eqref{eq:est_v} in Lemma~\ref{lem:1b} and~\eqref{eq:cor_a} in Corollary~\ref{cor:1},  we obtain  that
\begin{align*} 
 \mathcal{A}_1 & \le C \varepsilon^{\theta-1} \int_0^T  \int_{\mathbb{R}^3}  \phi_{\sigma}(\upsilon) |\upsilon|^{\theta+1} \Theta_{u(t)}(\varepsilon) d\upsilon dt \le C \varepsilon^{\theta-1}\sigma^{\theta+1} \omega_u(\varepsilon,\sigma),
\end{align*}
where $\omega_u$ given in~\eqref{def:omega}. Additionally, from~\eqref{eq:est_v_del} and~\eqref{eq:cor_b}, there holds
\begin{align*}
\mathcal{A}_2 \le  \| |v - v^{\sigma}| ( \nabla_x u^{\varepsilon,\sigma} - \nabla_x u^{\varepsilon})\|_{ \mathcal{L}^{1,p}_6} \le C \varepsilon^{\theta-1} \sigma^{\theta+1} \omega_u(\varepsilon,\sigma).
\end{align*}
From what have already been proved, we obtain commutator estimate \eqref{eq:Cfs}. 

It remains to prove the estimate in~\eqref{eq:Cfl}. To establish this commutator estimate for the Lorentz force term, it is possible for us to make the following decomposition as follows
\begin{align}\nonumber
(\mathcal{F}u)^{\gamma,\varepsilon,\sigma} - \mathcal{F}^{\gamma,\varepsilon,\sigma}u^{\gamma,\varepsilon,\sigma} &= \mathcal{K}_{\gamma,\varepsilon}(E,u^{\sigma}) - (E - E^{\gamma,\varepsilon})(u^{\sigma}-(u^{\sigma})^{\gamma,\varepsilon}) \\ \label{eq:TE_TB} & \qquad \qquad \qquad \qquad + (v\times Bu)^{\varepsilon,\sigma} - v^{\sigma}\times B^{\varepsilon} u^{\varepsilon,\sigma},
\end{align}
where $\mathcal{K}_{\gamma,\varepsilon}$ is given by
\begin{multline*}
\mathcal{K}_{\gamma,\varepsilon}(E,g)(t,x,\varsigma) =  \int_{\mathbb{R}} d\nu \int_{\mathbb{R}^3} dy \  \phi_{\gamma} (\nu) \phi_{\varepsilon}(y)  \\  \quad \cdot\left(E(t-\nu,x-y,\varsigma)-E(t,x,\varsigma)\right)\left(g(t-\nu,x-y,\varsigma)-g(t,x,\varsigma)\right).
\end{multline*}
For the sake of simplicity, in this work we will denote 
\begin{align}\label{eq:TB}
 T_E := \mathcal{K}_{\gamma,\varepsilon}(E,u^{\sigma}) - (E - E^{\gamma,\varepsilon})(u^{\sigma}-(u^{\sigma})^{\gamma,\varepsilon}),\ \ T_B := (v\times Bu)^{\varepsilon,\sigma} - v^{\sigma}\times B^{\varepsilon} u^{\varepsilon,\sigma},
\end{align}
and make the effective use of the Lebesgue dominated convergence theorem together with regularity assumptions~\eqref{cond:fEB}, passing to the limit $\gamma \to 0$ in $T_E$, yields that
\begin{align} \label{eq:TE_Leb2}
\|\nabla_{\varsigma} \cdot  T_E\|_{\mathcal{L}^{1,p,r}}  \le  \mathcal{A}_3 + \mathcal{A}_4, 
\end{align} 
where $\mathcal{A}_3 := \|\nabla_{\varsigma} \cdot \mathcal{K}_{\varepsilon}(E,u^{\sigma})\|_{\mathcal{L}^{1,p,r}}$  and $\mathcal{A}_4 := \|\nabla_{\varsigma} \cdot ((E - E^{\varepsilon})(u^{\sigma}-(u^{\sigma})^{\varepsilon}))\|_{\mathcal{L}^{1,p,r}}.$ By H{\"o}lder inequality, there holds
\begin{align}\nonumber
 \mathcal{A}_3 & \le \int_{\mathbb{R}^3} \phi_{\varepsilon}(y) \|(E(t,x-y)- E(t,x))  \cdot (\nabla_{\varsigma}u^{\sigma}(t,x-y,\varsigma)-\nabla_{\varsigma}u^{\sigma}(t,x,\varsigma))\|_{\mathcal{L}^{1,p,r}} dy\\ \nonumber
&\le \int_{\mathbb{R}^3} \phi_{\varepsilon}(y) \|E(t,x-y)- E(t,x)\|_{\mathcal{L}^{\infty,q}_3} \|\nabla_{\varsigma}u^{\sigma}(t,x-y,\varsigma)-\nabla_{\varsigma}u^{\sigma}(t,x,\varsigma)\|_{ \mathcal{L}^{1,p}_6} dy.
\end{align} 
Applying the estimate~\eqref{eq:f_w} in Lemma~\ref{lem:1} and the regularity assumptions~\eqref{cond:fEB}, we obtain that
\begin{align}\nonumber
\mathcal{A}_3  &\le C\int_{\mathbb{R}^3} \phi_{\varepsilon}(y)|y|^{\theta+\kappa} \|E\|_{\mathcal{L}^{\infty}\mathcal{W}^{\kappa,q}_3} \|\nabla_{\varsigma}u^{\sigma}(t,x,\varsigma)\|_{\mathcal{L}^{1,p}\mathcal{W}^{\theta,p}} dy,
\end{align}
which yields from~\eqref{eq:reg_gradf} in Lemma~\ref{lem:1a} that
\begin{align}\label{eq:estTE1}
\mathcal{A}_3  \le C \varepsilon^{\theta+\kappa} \sigma^{\theta-1} \|E\|_{\mathcal{L}^{\infty}\mathcal{W}^{\kappa,q}_3}\ \omega_u(\varepsilon,\sigma).
\end{align}
Thanks to H{\"o}lder inequality and~\eqref{eq:reg_f} and~\eqref{eq:cor_b}, the second term on the right hand side of~\eqref{eq:TE_Leb2} can be estimated as
\begin{align} \label{eq:estTE3}
\mathcal{A}_4  \le \|E-E^{\varepsilon}\|_{\mathcal{L}^{\infty,q}_3} \|\nabla_{\varsigma} u^{\sigma} - (\nabla_{\varsigma} u^{\sigma})^{\varepsilon}\|_{ \mathcal{L}^{1,p}_6}  \le C \varepsilon^{\theta+\kappa} \sigma^{\theta-1} \|E\|_{\mathcal{L}^{\infty}\mathcal{W}^{\kappa,q}_3} \ \omega_u(\varepsilon,\sigma).
\end{align}
From what have already been proved in~\eqref{eq:TE_Leb2}, \eqref{eq:estTE1} and \eqref{eq:estTE3}, we get
\begin{align}\label{eq:estTE}
\|\nabla_{\varsigma} \cdot T_E\|_{\mathcal{L}^{1,p,r}}  \le C \varepsilon^{\theta + \kappa} \sigma^{\theta-1} \|E\|_{\mathcal{L}^{\infty}\mathcal{W}^{\kappa,q}_3}\  \omega_u(\varepsilon,\sigma).
\end{align}
We next consider the term $T_B$ given by~\eqref{eq:TB}, which can be decomposed as
\begin{align} \label{eq:TB0}
T_B  = T_{B1}+T_{B2}+ T_{B3}, \ \mbox{ where }
\end{align}
\begin{align} \nonumber
T_{B1} & :=  \int\,  dX \ \phi_{\gamma}(\nu)\phi_{\varepsilon}(y)\phi_{\sigma}(\upsilon) [(v(\varsigma - \upsilon)-v(\varsigma)) \times B(t-\nu,x-y)] u(t-\nu, x-y,\varsigma - \upsilon) , \\ \nonumber 
T_{B2} & :=  v \times ((Bu^{\sigma})^{\gamma,\varepsilon}-B^{\gamma,\varepsilon}(u^{\sigma})^{\gamma,\varepsilon}), \mbox{ and } T_{B3}  :=  (v-v^{\sigma})\times B^{\gamma,\varepsilon}(u^{\sigma})^{\gamma,\varepsilon}.
\end{align}
Here we denote by $ \int\,  dX = \int_0^T d\nu \int_{\mathbb{R}^3}dy \int_{\mathbb{R}^3} d\upsilon$ for simplicity of notations. The first term of~\eqref{eq:TB0} can be decomposed and rewritten as follows
\begin{align} \nonumber
 \nabla_{\varsigma}  \cdot T_{B1} & = \int\,  dX \ \phi_{\gamma}(\nu)\phi_{\varepsilon}(y) \nabla_{\upsilon}\phi_{\sigma}(\upsilon) \\ \nonumber 
& \qquad \qquad \cdot [(v(\varsigma - \upsilon)-v(\varsigma)) \times B(t-\nu,x-y)]  u(t-\nu, x-y,\varsigma) \\ \nonumber 
& + \int\,  dX \  \phi_{\gamma}(\nu)\phi_{\varepsilon}(y)\nabla_{\upsilon}\phi_{\sigma}(\upsilon) \cdot [(v(\varsigma - \upsilon)-v(\varsigma)) \times B(t-\nu,x-y)] \\ \label{eq:TBI} 
& \qquad \qquad \cdot[u(t-\nu, x-y,\varsigma - \upsilon) - u(t-\nu, x-y,\varsigma)] =: I_1 + I_2.
\end{align}
Integrating by parts and since $\nabla_{\upsilon} \cdot [(v(\varsigma - \upsilon)-v(\varsigma)) \times B(t-\nu,x-y)]=0$, one observes that the first term also vanishes:
\begin{multline}\label{eq:I1} 
I_1 = \int\,  dX \ \phi_{\gamma}(\nu)\phi_{\varepsilon}(y) \phi_{\sigma}(\upsilon) \\  \nabla_{\upsilon} \cdot [(v(\varsigma - \upsilon)-v(\varsigma)) \times B(t-\nu,x-y)]  u(t-\nu, x-y,\varsigma) = 0,
\end{multline}
and by H{\"o}lder inequality and estimate~\eqref{eq:est_v} in Lemma~\ref{lem:1b}, it is easy to obtain that
\begin{multline*}
\|I_2\|_{\mathcal{L}^{1,p,r}} \le 2 \int\,  dX \ \phi_{\gamma}(\nu)\phi_{\varepsilon}(y)|\nabla_{\upsilon}\phi_{\sigma}(\upsilon)||\upsilon|  \|B\|_{\mathcal{L}^{\infty,q}_3}   \\ 
\|u(t-\nu, x-y,\varsigma - \upsilon) - u(t-\nu, x-y,\varsigma)\|_{\mathcal{L}^{1,p}_6}. 
\end{multline*}
We then apply the estimate~\eqref{eq:f_w} in Lemma~\ref{lem:1}, the restriction property for Sobolev spaces $W^{\theta,p}(\mathbb{R}^n)$ and regularity assumptions~\eqref{cond:fEB}, it deduces from the above inequality that
\begin{align} \nonumber
\|I_2\|_{\mathcal{L}^{1,p,r}}  &\le C \int_{\mathbb{R}^3} |\nabla_{\upsilon}\phi_{\sigma}(\upsilon)||\upsilon|^{\theta+1} d\upsilon  \|B\|_{\mathcal{L}^{\infty,q}_3}{\|u\|_{\mathcal{L}^{1,p}\mathcal{W}^{\theta,p}}}  \\  \label{eq:I2} 
&\le C \sigma^{\theta}  \|B\|_{\mathcal{L}^{\infty}\mathcal{W}^{\kappa,q}_3} \|u\|_{\mathcal{L}^{1}\mathcal{W}^{\theta,p}_6}.
\end{align}
It follows easily that from \eqref{eq:TBI},  \eqref{eq:I1} and~\eqref{eq:I2}, one has
\begin{align}\label{eq:estTB1}
\|\nabla_{\varsigma}  \cdot T_{B1} \|_{\mathcal{L}^{1,p,r}} \le C \sigma^{\theta}  \|B\|_{\mathcal{L}^{\infty}\mathcal{W}^{\kappa,q}_3} \|u\|_{\mathcal{L}^{1}\mathcal{W}^{\theta,p}_6}.
\end{align}
To estimate $\nabla_{\varsigma} \cdot T_{B2}$, we can now proceed analogously to what we have obtained in \eqref{eq:estTE} for $\nabla_{\varsigma} \cdot T_{E}$, giving
\begin{align} \label{eq:estTB2}
\|\nabla_{\varsigma} \cdot T_{B2}\|_{\mathcal{L}^{1,p,r}}  \le C \varepsilon^{\theta + \kappa} \sigma^{\theta-1} \|B\|_{\mathcal{L}^{\infty}\mathcal{W}^{\kappa,q}_3} \  \omega_u(\varepsilon,\sigma).
\end{align}
H{\"o}lder inequality is used repeatedly to obtain
\begin{align*}
\|\nabla_{\varsigma} \cdot T_{B3}\|_{\mathcal{L}^{1,p,r}}  \le |v-v^{\sigma}| \|B^{\gamma,\varepsilon}\|_{\mathcal{L}^{\infty,q}_3} \|\nabla_{\varsigma} u^{\gamma,\sigma,\varepsilon}\|_{ \mathcal{L}^{1,p}_6}.
\end{align*}
Applying estimate~\eqref{eq:est_v_del} in Lemma~\ref{lem:1b}  and Lemma~\ref{lem:1} to this inequality, we have
\begin{align}\label{eq:estTB3}
\|\nabla_{\varsigma} \cdot T_{B3}\|_{\mathcal{L}^{1,p,r}}  \le C \sigma \|B^{\gamma,\varepsilon}\|_{\mathcal{L}^{\infty,q}_3} \|\nabla_{\varsigma} u^{\gamma,\sigma,\varepsilon}\|_{ \mathcal{L}^{1,p}_6}  
 \le C \sigma^{\theta} \|B\|_{\mathcal{L}^{\infty}\mathcal{W}^{\kappa,q}_3} \|u\|_{\mathcal{L}^{1}\mathcal{W}^{\theta,p}_6}.
\end{align}
Gathering estimates \eqref{eq:estTB1}-\eqref{eq:estTB3}, we obtain from \eqref{eq:TB0} that
\begin{align} \label{eq:estTB}
\|\nabla_{\varsigma} \cdot T_{B}\|_{\mathcal{L}^{1,p,r}}  \le C \|B\|_{\mathcal{L}^{\infty}\mathcal{W}^{\kappa,q}_3} \left( \varepsilon^{\theta + \kappa} \sigma^{\theta-1}  \  \omega_u(\varepsilon,\sigma)  +  \sigma^{\theta}   \|u\|_{\mathcal{L}^{1}\mathcal{W}^{\theta,p}_6}\right).
\end{align}
Finally, by estimates \eqref{eq:estTE} and \eqref{eq:estTB}, we obtain \eqref{eq:Cfl} from~\eqref{eq:TE_TB} and therefore, the proof of Lemma is then complete.
\end{proof}

With aid of Lemma \ref{lem:2} and lemmas presented in the preceding section, we readily prove the main theorem. 

\begin{proof}[Proof of Theorem~\ref{theo:main}]
We firstly use the notation $\int\,  dX = \int_0^T dt \int_{\mathbb{R}^3}dx \int_{\mathbb{R}^3} d\varsigma$ again for simplicity, the weak formulation for the Vlasov equation~\eqref{eq:sys1} reads
\begin{align}\label{eq:weakform}
 \int\,  dX \ u(\partial_t \varphi + v\cdot \nabla_x \varphi + \mathcal{F} \cdot \nabla_{\varsigma}\varphi) = 0, \quad \forall \varphi \in \mathcal{D}((0,T)\times \mathbb{R}^6),
\end{align}
where $\mathcal{F} = E + v \times B$ denotes the Lorentz force field. We remark that integrals in~\eqref{eq:weakform} are finite since for DiPena-Lions weak solutions in~\cite{Diperna}, it is known that $u \in \mathcal{L}^{\infty,2}_6$ and $E, B \in \mathcal{L}^{\infty,2}_3$. For every positive numbers $\gamma$, $\varepsilon$ and $\sigma$, let us take the test function in~\eqref{eq:weakform} as
\begin{align}
\label{eq:testfunction}
\varphi =  (\mathcal{G}'(u^{\gamma,\varepsilon,\sigma})\psi)^{\gamma,\varepsilon,\sigma} \in \mathcal{D}((0,T)\times \mathbb{R}^6),
\end{align}
with $\psi \in \mathcal{D}((0,T)\times \mathbb{R}^6)$ and $\mathcal{G} \in \mathcal{C}^1(\mathbb{R}^+;\mathbb{R}^+)$.
By using Lemma~\ref{lem:1} and successive integrations by parts, we obtain from \eqref{eq:weakform} and \eqref{eq:testfunction} that
\begin{multline*}
 \int\,  dX \ \mathcal{G}(u^{\gamma,\varepsilon,\sigma})\left(\partial_t \psi + v^{\sigma}\cdot \nabla_x \psi + \mathcal{F}^{\gamma,\varepsilon,\sigma}\cdot \nabla_{\varsigma} \psi\right) \\ + \psi \mathcal{G}'(u^{\gamma,\varepsilon,\sigma}) \left[\nabla_x \cdot \left((vu)^{\gamma,\varphi,\sigma}-v^{\sigma}u^{\gamma,\varepsilon,\sigma}\right) + \nabla_{\varsigma} \cdot\left((\mathcal{F}u)^{\gamma,\varepsilon,\sigma}-\mathcal{F}^{\gamma,\varepsilon,\sigma}u^{\gamma,\varepsilon,\sigma}\right)\right]  = 0, 
\end{multline*}
for all $\psi \in \mathcal{D}((0,T)\times \mathbb{R}^6)$. We now establish the renormalized Vlasov equation~\eqref{eq:renorl_property}. Using assumptions \eqref{cond:fEB}, Lemma~\ref{lem:1},~\ref{cor:1} and \ref{lem:2}, we obtain that
\begin{multline}\label{eq:weakform3}
\left|\int\,  dX \ \mathcal{G}(u^{\gamma,\varepsilon,\sigma})\left(\partial_t \psi + v^{\sigma}\cdot \nabla_x \psi + \mathcal{F}^{\gamma,\varepsilon,\sigma}\cdot \nabla_{\varsigma} \psi\right)\right| \\ \le C\left(\varepsilon^{\theta-1}\sigma^{\theta + 1} + \varepsilon^{\theta+\kappa} \sigma^{\theta-1}\right)\omega_u(\varepsilon,\sigma) + C\sigma^{\theta}, 
\end{multline}
where the function $\omega$ given in~\eqref{def:omega} and the constant $C$ depends on $\|u\|_{\mathcal{L}^1\mathcal{W}^{\theta,p}_6}$, $\|B\|_{\mathcal{L}^{\infty}\mathcal{W}^{\kappa,q}_3}$, $\|E\|_{\mathcal{L}^{\infty}\mathcal{W}^{\kappa,q}_3}$, $\mathcal{G}$ and $\psi$. We see that
$$ \varepsilon^{\theta-1}\sigma^{\theta + 1} + \varepsilon^{\theta+\kappa} \sigma^{\theta-1} = \varepsilon^{\theta-1}\sigma^{\theta - 1} \left(\sigma^2 + \varepsilon^{\kappa +1}\right).  $$
 Therefore, to balance contributions coming from two terms on the right hand side of~\eqref{eq:weakform3}, we may choose $\sigma^2 = \varepsilon^{\kappa+1}$, which guarantees that 
\begin{align*}
\left| \int\,  dX \ \mathcal{G}(u^{\gamma,\varepsilon,\sigma})\left(\partial_t \psi + v^{\sigma}\cdot \nabla_x \psi + \mathcal{F}^{\gamma,\varepsilon,\sigma}\cdot \nabla_{\varsigma} \psi\right)\right|  \le C\left(\varepsilon^{\frac{\theta \kappa + \kappa + 3\theta-1}{2}}\omega_u(\varepsilon,\sigma) + \sigma^{\theta}\right). 
\end{align*}
Under our general assumption $\theta \kappa + \kappa + 3\theta-1 \ge 0$, we deduce that
\begin{align}\label{eq:weakform4}
\left| \int\,  dX \ \mathcal{G}(u^{\gamma,\varepsilon,\sigma})\left(\partial_t \psi + v^{\sigma}\cdot \nabla_x \psi + \mathcal{F}^{\gamma,\varepsilon,\sigma}\cdot \nabla_{\varsigma} \psi\right)\right|  \le C\left(\omega_u(\varepsilon,\sigma) + \sigma^{\theta}\right).
\end{align}
Thanks to Lemma~\ref{lem:omega} and $\theta \in (0,1)$, the right hand side of~\eqref{eq:weakform4} vanishes as $(\varepsilon,\sigma)$ goes to 0. So we obtain the renormalization property~\eqref{eq:renorl_property} of the Vlasov equation. Finally, the local and global entropy conservation laws~\eqref{eq:local_law1}, \eqref{eq:global_law} can be established by the same method as in~\cite{Bardos} under assumption $\theta \kappa + \kappa + 3\theta-1 \ge 0$. 
\end{proof}


\end{document}